 \newcommand{\Int}{\operatorname{int}}
 \newcommand{\real}{\operatorname{Re}}
\newcommand{\suchthat}{\, | \,}
\newcommand{\spanop}{\operatorname{span}}
\newcommand{\Poincare}{Poincar\'e}
 \newtheorem{proof}{Proof}
\newtheorem{Theorem}{Theorem}
 \newtheorem{Lemma}{Lemma}
\newtheorem{Corollary}{Corollary}
\newtheorem{Definition}{Definition}
\newtheorem{Proposition}[Theorem]{Proposition}
 \newtheorem{Remark} {Remark}
 \newtheorem{Example} {Example}
\newcommand{\beq}       {\begin{equation}}
\newcommand{\eeq}       {\end{equation}}
\newcommand{\bery}      {\begin{array}}
\newcommand{\eery}      {\end{array}}
\newcommand{\berys}     {\begin{array*}}
\newcommand{\eerys}     {\end{array*}}
\newcommand{\beqry}     {\begin{eqnarray}}
\newcommand{\eeqry}     {\end{eqnarray}}
\newcommand{\beqrys}    {\begin{eqnarray*}}
\newcommand{\eeqrys}    {\end{eqnarray*}}
\def \R {{\mathbb R}}
\def \B {{\mathcal B}}
\newcommand{\be}{\begin{equation}}
\newcommand{\ee}{\end{equation}}
\begin{document}
\begin{frontmatter}

\title{\LARGE   Instability of equilibrium  and  convergence to   periodic orbits   in    strongly 2-cooperative~systems   }

\thanks[footnoteinfo]{R.K. and G.G. acknowledge support by the European Union through the ERC INSPIRE grant (project n. 101076926).
Views and opinions expressed are however those of the authors only and do not necessarily reflect 
those of the~EU, the European Research Executive Agency or the European Research Council. Neither the~EU nor the granting authority can be held responsible for them. The research of M.M. is partially supported by the ISF.} 

\author[TN]{Rami Katz} 
\author[TN]{Giulia Giordano}
\author[TAU]{Michael Margaliot}

\address[TN]{Department of Industrial Engineering,
University of Trento, Italy.}
\address[TAU]{School of Electrical Engineering, Tel Aviv University, Israel.}

\begin{abstract}
  %%%%%
We consider time-invariant 
nonlinear $n$-dimensional   strongly  $2$-cooperative systems, that is, systems that map the  set of vectors with up to  weak   sign variation to its interior. 
Strongly  $2$-cooperative systems   enjoy a strong  \Poincare-Bendixson property:  bounded solutions that maintain a positive distance from the set of 
equilibria   converge to a periodic solution. 
%\st{ any   solution that evolves in a compact set containing no equilibria converges to a periodic orbit.}
  For strongly  $2$-cooperative systems whose trajectories evolve in a bounded and invariant set that contains a single unstable equilibrium, we provide a simple criterion for the existence of periodic trajectories. Moreover, we explicitly characterize a positive-measure set of initial conditions which yield solutions that asymptotically converge to a periodic trajectory. 
%\st{We provide  a simple sufficient condition that guarantees the existence of an invariant   compact  set in the   state space that   includes no equilibrium points, and thus includes at least one periodic orbit. We also explicitly describe a set of initial conditions  such that solutions emanating from this set converge to a periodic orbit.}
We demonstrate our theoretical results using two models from systems biology,  the $n$-dimensional Goodwin oscillator and a $4$-dimensional biomolecular oscillator with RNA-mediated regulation, and provide numerical simulations that verify the  theoretical results. 
%\st{In particular, we show for the former that  if the unique equilibrium is unstable then the model  admits at least one periodic orbit and explicitly describe  a  set of initial solutions such that solutions emanating from this set converge to a periodic orbit.}
   %%%%%%%%%%%%%%%%%%%
\end{abstract}
%%%%%%%%%%%%%%%%%%%

\begin{keyword}
Asymptotic analysis, compound matrices, sign variations,    cones of rank~$k$, biological oscillators. 
\end{keyword} 
 %%%%%%%%%%%%%%%%%%%%%%%%%%%%%%%%%%%%%%%%%%%%
  %%%%%%%%%%%%%%%%%%%%%%%%%%%%%%%%%%%%%%%%%  

  \end{frontmatter}

\section{Introduction}

Oscillations, as well as periodic behaviors, rhythms and regular patterns, are among the most widespread behavioral motifs in nature \citep{Goldbeter2012}, but they are still not entirely understood.
Biological clocks and oscillators are frequently encountered at various scales. 
Single-molecule clocks \citep{Johnson2017} include the segmentation clock \citep{Uriu2009,Uriu2010}, which rules pattern generation in vertebrate embryonic development.
The regulation of cell life cycle and metabolism hinges upon biomolecular oscillators \citep{Ferrell2011}.
An internal biological clock regulates the physiological processes of living beings on Earth, including plants, animals and humans, according to circadian rhythms \citep{Doyle2006}, i.e. almost daily (\emph{circa dies}) fluctuations that are almost exactly synchronized with the 24-hour rotation period of the planet; this biological clock regularly oscillates even when the organism faces constant darkness, or constant light, and is entrained by the alternating daylight and darkness. The circadian clock thus allows living organisms to effectively adapt their physiology to the periodic rhythms of sunlight and darkness, and regulates e.g. hormone secretion, body temperature and metabolic functions.
Jeffrey C. Hall, Michael Rosbash and Michael W. Young received the 2017 Nobel Prize in Physiology or Medicine ``for their discoveries of molecular mechanisms controlling the circadian rhythm'' in fruit flies.
%\footnote{\url{https://www.nobelprize.org/prizes/medicine/2017/prize-announcement/}} 
%They found that the gene controlling circadian rhythms encodes a protein that accumulates during the night and is degraded during the day, so that its levels oscillate with a 24-hour period, and that, by binding to a second protein encoded by a different gene, inhibits its own synthesis, thus forming a negative feedback loop with delay that generates and sustains the self-regulating periodic behaviour.
Also in synthetic biology, building biomolecular computing systems requires oscillators with a timekeeping function; several design principles and architectures have been proposed so far \citep{novak2008,syn_osci_2020}, but constructing reliable biomolecular oscillators with tunable amplitude and phase is still an open challenge.

Mathematical  control theory~\citep{sontag_book} offers powerful methodologies to regulate dynamical systems, but the  synthesis  and  analysis
    of 
biological oscillators   that exhibit periodic behaviors is still a major challenge. This type of questions is significant not only in the life sciences, but also in numerous applications in engineering, including vibrational mechanics \citep{Blekhman2000} and power electronics \citep{Schubert2016}.

%\cite{GOODWIN1965425} states that: ``It is of fundamental importance to an understanding of cellular organization whether or not the dynamic activity of molecular control processes involves oscillatory behavior.''
% GG: I suggest to remove the quote so as to avoid overlap with the 3-dimensional paper

  In general, it is important  to provide sufficient conditions guaranteeing that a bounded solution of an $n$-dimensional nonlinear system converges to a periodic orbit, thereby allowing for oscillatory behavior of solutions. 
%\st{identify cases where indeed a bounded  solution does not converge to an equilibrium, but to a  periodic orbit, implying that the system admits at least one   periodic solution, thereby allowing for oscillatory behavior}. 
Determining whether a set of nonlinear ODEs admits a periodic orbit, and if so, what is the set of initial conditions yielding convergence to a periodic orbit is a highly non-trivial problem~\citep{frakas_periodic_book}. 
%%%%%%

We   focus on   a specific class of   nonlinear  dynamical systems that are strongly $2$-cooperative. These systems are known to  satisfy  a  strong \Poincare-Bendixson property:  if the $\omega$-limit set of a bounded   solution contains no equilibrium points, then it is a periodic orbit~\citep{WEISS_k_posi}.

  Here, we consider an $n$-dimensional  strongly 2-cooperative system evolving in a convex, invariant and bounded state space $\Omega$, which admits a unique equilibrium~$e\in \Omega$, such that the Jacobian of the vector field computed at~$e$ admits at least two unstable eigenvalues. Our main result shows  that the system admits an explicit set of initial conditions of positive measure, such that every solution emanating from this set converges to a periodic orbit. 
%\st{Roughly speaking, the main result of this paper shows that if an $n$-dimensional  strongly 2-cooperative system, with a convex and bounded state space, admits a unique equilibrium~$e$  and   the Jacobian of the vector field at~$e$ admits at least two unstable eigenvalues   then the system admits an invariant set that does not include any equilibria  and thus any solution emanating from this set, which we characterize  explicitly, converges to a periodic orbit.}
Our result generalizes recent work on the particular case of 3-dimensional strongly 2-cooperative systems~\citep{rami_3dGoddwin_2024}. 
The generalization to the   $n$-dimensional case 
requires several additional tools from the theory of cones of rank~$k$~\citep{Sanchez2009ConesOR}, 
the spectral theory of totally positive matrices (see, e.g.~\citep{total_book,fulltppaper}),
and a generalization of the Perron-Frobenius theorem to cones of rank~$k$~\citep{fusco_oliva_1991}. 

 To demonstrate the usefulness of the  theoretical result, we consider two important  examples from systems biology: the well-known $n$-dimensional Goodwin   system~\citep{GOODWIN1965425}, and a four-dimensional  biomolecular oscillator with RNA-mediated regulation 
proposed by~\citep{BCFG2014}. We rigorously analyze the oscillatory behavior of both systems, characterizing a set of initial conditions from which the trajectories converge to a periodic orbit, and we provide numerical simulations that validate our theoretical results. 
%\st{In both systems, we rigorously characterize a set of initial conditions that converge to a periodic orbit, thereby also showing the existence of such an orbit.}

The remainder of this paper  is organized as follows. 
The next section reviews known
definitions and results that are  used throughout the paper. 
Section~\ref{sec:main_res} presents our main result. The proof of our main result is given in Section~\ref{sec:proof}.
Section~\ref{sec:applic} describes two applications from systems biology. 
The concluding section elaborates on possible directions for future research.  

%%%%%%%%%
\section{Notations and Preliminary results}
%%%%%%%%%%%%%%%%%%%%%%%%%%%%%%%%%
%%%%%%%
 Given an integer~$n\geq 1$, let  $[n]:=\left\{1,\dots,n \right\}$.
The cardinality of a set~$S$ is denoted by~$\operatorname{card}(S)$.
For a set~$S \subseteq \mathbb{R}^n$, we denote by~$\operatorname{int}\left(S \right)$ and~$\operatorname{clos}\left(S \right)$ its interior and closure, respectively. For two sets~$S_1,S_2 \subseteq \mathbb{R}^n $ and  a scalar $c \in \mathbb{R}$, we denote by $S_1+S_2 = \left\{s_1+s_2 \suchthat  s_i\in  S_i,\ i=1,2 \right\}$ and $c S_1 = \left\{c s_1  \suchthat
s_1\in  S_1 \right\}$ the usual set addition and scalar-set multiplication.  
We denote vectors and matrices by lowercase and uppercase letters, respectively. For~$\varepsilon>0$ and~$y\in \mathbb{R}^n$, let~$B(y,\varepsilon)$ denotes the open Euclidean ball centered at $y$ with radius $\varepsilon$. The non-negative orthant in~$\R^n$ is  
$\mathbb{R}^n_{\geq 0}:=\left\{x\in\R^n \suchthat x_i\geq 0,\  i\in [n] \right\}$,
and the non-positive orthant is $\mathbb{R}^n_{\leq 0}:=-\mathbb{R}^n_{\geq 0}$. The transpose, spectrum and determinant of a matrix $A$ are denoted by $A^\top$, $\operatorname{spec}(A)$ and $\det(A)$, respectively. $I_n$ is the~$n\times n$ identity matrix.  Given matrices $A_j\in \mathbb{R}^{n_j\times n_j},\ j\in [m]$, we denote by $\operatorname{diag}(A_1,\dots, A_m)$ the block diagonal matrix with diagonal elements $  A_j  $. A square matrix $A$ is \emph{Hurwitz} if all its eigenvalues have a negative real part, \emph{unstable} if it admits an eigenvalue with a positive real part, and  \emph{Metzler}  if all its off-diagonal entries are non-negative.

A matrix~$\bar A$ is a {\sl sign matrix} if every entry of~$\bar A$ is either~$*$ (``don't care''), $\leq 0$,   $0$,  or~$\geq 0$. A time-varying matrix~$A(t)$ has the sign pattern~$\bar A$ if the following three  properties hold at all times~$t\geq 0$:
\begin{enumerate}
\item $a_{ij}(t)\leq 0$  for all indices~$i,j$ such that~$\bar a_{ij}$ is $\leq 0$,
\item $a_{ij}(t)=0$  for all indices~$i,j$ such that~$\bar a_{ij}$ is $0$,
\item $a_{ij}(t)\geq 0$  for all indices~$i,j$ such that~$\bar a_{ij}$ is $\geq 0$,
\end{enumerate}
with no restriction  imposed  on~$a_{ij}(t)$  when~$\bar a_{ij}$ is $*$.

 %%%%%%%%
\subsection{From cooperative to~$k$-cooperative systems}
%%%%%%%%%%%%%%%%%

The flow of cooperative systems~\citep{hlsmith} maps the non-negative orthant $\mathbb{R}^n_{\geq 0}$ 
to itself, and also the non-positive orthant $\mathbb{R}^n_{\leq 0}$ to itself. 
In other words, the flow maps the set of vectors whose entries exhibit no sign changes (zero sign variations) to itself. Cooperative  systems have many special asymptotic properties: Hirsch's 
  quasi-convergence theorem asserts that in a strongly  cooperative system  with bounded solutions  all initial conditions outside of a zero-measure set give rise to solutions that converge to an equilibrium.
%\st{almost every bounded solution converges to an equilibrium point.}

Cooperative systems and cooperative control systems~\citep{mcs_angeli_2003}, which allow to study interconnections of cooperative systems,  have found numerous applications in various  fields, including social dynamics~\citep{altafini_survey}, dynamic neural networks~\citep{smith_neural}, chemistry and systems biology~\citep{Angeli2004,BLANCHINI_GIORDANO_SURVEY,Donnell2009120,mono_chem_2007,RFM_stability,RFM_model_compete_J,sontag_near_2007}.
However, since a strongly cooperative system cannot admit an \emph{attracting} periodic orbit~\citep{hlsmith}, cooperative systems theory is not suitable to support the study of systems that admit periodic solutions.

The theory of~$k$-cooperative systems 
provides a generalization of 
  cooperative systems
\citep{WEISS_k_posi}. The flow of a $k$-cooperative system maps the set  of vectors with up to~$k-1$   weak  sign variations to itself. In particular, $1$-cooperative systems are cooperative systems. 
Also, $(n-1)$-cooperative systems, where~$n$ is the dimension of the system, are, up to a coordinate transformation,  competitive systems~\citep{WEISS_k_posi}. %, so  $k$-cooperative systems  provide a way to ``interpolate'' between cooperative and competitive systems. 
The property of $k$-cooperativity   depends on the \emph{sign structure}  (also called \emph{sign pattern}) of the system Jacobian, and hence it can be assessed without knowing the exact values of various system parameters. Below, we recall the formal definitions of these concepts.

\subsection{Sign variations in a vector}\label{sec:p2_homog}
%%%%%%%%%%%%%%%%%%%%%%%%%%%%
\begin{Definition}\label{ref:sigVari}
Given the vector $x\in \mathbb{R}^n$ with $\prod_{i=1}^nx_i \neq 0$, the \emph{number of sign variations} in $x$ is denoted by
\begin{equation*}
    \sigma(x) := \operatorname{card}\left(\left\{i   \in [n-1] \suchthat  x_ix_{i+1}<0 \right\} \right).
\end{equation*} 
\end{Definition}
%%%%%%%%%
For example,   $\sigma\left(\left[2\pi~-3.3~ \sqrt{2}\right]^\top\right)=2$.

Two useful  generalizations of~$\sigma(\cdot)$ to vectors that may contain zero entries are offered by the theory of totally positive matrices (see, e.g.,~\cite{total_book,gk_book,pinkus}). 
\begin{Definition}\label{def:siggn_var}
Given~$x\in\R^n\setminus\{0\}$, let~${x}^d$ denote the vector
obtained from~$x$ by deleting all its zero entries. The \emph{weak number of sign variations} in~$x$ is 
\begin{equation*}
s^-(x):=\sigma({x}^d).
\end{equation*}
Let  $\mathcal{S}_x\subseteq \mathbb{R}^n$ denote  the set of all vectors obtained from~$x$ by replacing each of its zero entries by either~$+1$ or~$-1$. 
The  \emph{strong number of sign variations} in~$x$ is 
\begin{equation*}
s^+(x):=\max_{z\in\mathcal{S}_x} \sigma(z).   
\end{equation*} 
Finally, for $0\in \R^n$, we set~$s^-(0):=0$ and~$s^+(0):=n-1$.
\end{Definition}
 For example, for~$x=\left[-3~0~0~3~4\right]^\top$,
we have $s^-(x)=\sigma\left(\left[-3~3~4  
\right]^\top\right)=1$, whereas  
$s^+(x)= \sigma\left(\left[-3~1~-1~3~4
\right]^\top\right)=3$.

By defintion, 
 \begin{equation}\label{eq:Signineq}
0\leq s^-(x)\leq s^+(x) \leq n-1 \text{ for all } x\in\R^n.
 \end{equation}

Throughout the paper we will employ the following lower semi-continuity property of~$s^-$.
 \begin{Lemma}\citep[Lemma 3.2]{pinkus}\label{lem:cont_s_minus}
 The function $s^- \colon \mathbb{R}^n\to \mathbb{R}$ is lower semi-continuous.
 \end{Lemma}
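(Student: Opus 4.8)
Here is how I would prove that $s^-\colon\R^n\to\R$ is lower semi-continuous.

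\textbf{Overall plan.} The plan is to show that for every integer $m\ge 0$ the superlevel set $\{x\in\R^n : s^-(x)\ge m\}$ is open. Since $s^-$ is integer valued (indeed it takes values in $\{0,1,\dots,n-1\}$), this is exactly the assertion that $s^-$ is lower semi-continuous: every set of the form $\{x : s^-(x)>\alpha\}$ with $\alpha\in\R$ coincides with $\{x : s^-(x)\ge m\}$ for a suitable integer $m$, so openness of all these sets is equivalent to openness of all the sets $\{s^->\alpha\}$, which is the definition of lower semi-continuity.

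\textbf{The key combinatorial step.} I would first establish the characterization: $s^-(x)\ge m$ if and only if there exist indices $1\le i_0<i_1<\dots<i_m\le n$ with $x_{i_t}\ne 0$ for all $t$ and $\sign(x_{i_t})=-\sign(x_{i_{t+1}})$ for $t=0,\dots,m-1$. For the ``if'' direction, note that $x_{i_0},\dots,x_{i_m}$ are all entries of $x^d$; between the occurrences of $x_{i_t}$ and $x_{i_{t+1}}$ in $x^d$ every entry is nonzero and the two endpoints have opposite signs, so that portion of $x^d$ contains at least one adjacent pair with a sign change, and for distinct $t$ these portions overlap in at most one index, so the corresponding sign changes are distinct; hence $s^-(x)=\sigma(x^d)\ge m$. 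For the ``only if'' direction, partition $x^d$ into its maximal runs of constant sign: if $s^-(x)=\sigma(x^d)=p$ there are exactly $p+1\ge m+1$ such runs, and selecting from each of the first $m+1$ runs the index in $[n]$ of one of its entries yields the desired $i_0<\dots<i_m$, since consecutive runs have opposite sign by maximality.

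\textbf{Conclusion.} Given this characterization, for each $m$ one can write $\{x\in\R^n : s^-(x)\ge m\}$ as the \emph{finite} union, over all tuples $1\le i_0<\dots<i_m\le n$ and all $\varepsilon\in\{-1,+1\}$, of the sets $\{\,x\in\R^n : (-1)^t\varepsilon\, x_{i_t}>0,\ t=0,\dots,m\,\}$. Each such set is an intersection of $m+1$ open half-spaces, hence open, and a finite union of open sets is open; therefore every superlevel set of $s^-$ is open, i.e.\ $s^-$ is lower semi-continuous.

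I do not expect a genuine obstacle here: this is an easy lemma, and the only point that deserves a moment of care is the combinatorial characterization above — in particular the observation that the zero entries of $x$ cause no trouble, because a small perturbation $y$ of $x$ keeps every nonzero coordinate of $x$ nonzero and of the same sign, so the alternating ``witness'' pattern encoded by $i_0,\dots,i_m$ (which only refers to nonzero coordinates) is preserved, which is precisely what makes each superlevel set open.
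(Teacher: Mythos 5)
The paper does not give its own proof of this lemma: it simply cites Pinkus's book (Lemma~3.2 there), so there is no in-paper argument to compare against. Your self-contained proof is correct and is essentially the standard argument from the totally-positive-matrices literature: $s^-(x)\ge m$ (for $m\ge1$) exactly when $x$ admits a strictly alternating-sign subsequence of nonzero coordinates of length $m+1$, which is an open condition, so the superlevel sets of the integer-valued map $s^-$ are open, i.e.\ $s^-$ is lower semi-continuous. The one small nitpick is the case $m=0$: your characterization would give $\{x:s^-(x)\ge 0\}=\R^n\setminus\{0\}$ rather than $\R^n$, since $s^-(0)=0$ by convention yet $0$ has no nonzero coordinate to serve as a witness. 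This is harmless — $\{s^-\ge 0\}=\R^n$ is open regardless, and lower semi-continuity only requires the openness of $\{s^->\alpha\}=\{s^-\ge\lfloor\alpha\rfloor+1\}$ for $\alpha\ge 0$, which reduces to the cases $m\ge 1$ where your characterization and the resulting finite union of open sets are exactly right — but it would be cleaner to state the combinatorial equivalence only for $m\ge 1$ and dispose of $m\le 0$ by observing $\{s^-\ge m\}=\R^n$ trivially.
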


%  The following result describes a useful  continuity property of~$s^-$.
%  %%%%%%%%%%%%%%%%%%%%
% \begin{Lemma}\citep{pinkus}\label{lem:cont_s_minus}
% %%%%%%%%%%%%%%%%%%%
% Let $k\in \mathbb{N}$ and let~$x^1,x^2,\dots$ be a sequence of vectors in~$\R^n$ that converges to a limit~$x:=\lim_{j\to \infty}x^j$. 
% If~$s^-\left(x^j \right)\leq k$ for all $j=1,2,\dots$, then  $s^-(x)\leq k$.
% \end{Lemma}

The notions of weak and strong (number of) sign variations, respectively $s^-$ and $s^+$, yield the following special sets.
%%%%%%%%%%%%%%%%%%%
\begin{Definition}\label{Def:PkSets}
Given~$k\in [n]$, we define the \emph{set} 
$P^k_- :=\{x\in\R^n\ | \ s^-(x)\leq k-1\}$ (respectively, $P^k_+ :=\{x\in\R^n\ | \ s^+(x)\leq k-1\}$) \emph{of vectors with up to $k-1$ weak} (respectively, \emph{strong}) \emph{sign variations}.
\end{Definition}
For example, for~$k=1$ we have~$P^1_-=\R^n_{\geq 0}\cup \R^n_{\leq 0}$  and $P^1_+=\Int(\R^n_{\geq 0}\cup \R^n_{\leq 0})$. 

We list here several properties of the sets~$P_{-}^k$ and~$P_{+}^k$ that hold for any~$k\in[n]$; the proofs can be found in ~\cite{WEISS_k_posi}.  

%%%%%%%%%%%%%
\begin{enumerate}
    \item[\textbf{(P1)}]   
    %%%
    $P^k_{-}$ is closed,  
   $P^k_+$ is open, and~$P^k_+=\Int(P^k_-)$.
    \item[\textbf{(P2)}] 
    %%%%%%%%%
    If $y\in P^k_{-}$, then $\alpha y \in P^k_{-}$ for any~$\alpha\in\R$; in particular,~$P^k_-$ is a closed cone.
    If $y\in P^k_{+}$, then $\alpha y \in P^k_{+}$ for any~$\alpha\in\R\setminus\{0\}$.
    \end{enumerate}

The set $P^k_- $ is a closed cone, but
unlike~$\R^n_{\geq 0 }$, it is not a convex cone. For example, for~$n=3$, 
$y^1:=\left[-5~1~1\right]^\top $, and $y^2:=\left[1~1~-5\right]^\top 
$, we have~$s^-(y^1)=s^-(y^2)=1$, and hence~$y^1,y^2 \in P^2_-$, but their convex combination
$\frac{1}{2}y^1+\frac{1}{2} y^2=\left[-2~1~-2\right]^\top \not \in P^2_-$.    

In order to state two more properties of~$P^k_-$, recall that a   set~$S\subseteq\R^n$ is called a cone  of rank~$k$ 
if: (i)~$S$ is closed; (ii) $x\in S$ implies that~$\alpha x\in S$ for all~$\alpha\in \R$; and (iii)~$S$ contains a linear subspace of dimension~$k$, and no linear subspace of dimension larger than~$k$. A cone of rank~$k$ is called $k$-solid if there exists a   linear subspace~$V$ of dimension~$k$ such that~$V\setminus\{0\}\subseteq \Int(S)$ (see \cite{KLS89,Sanchez2009ConesOR}). 

 \begin{enumerate}
%%%%%   
    \item[\textbf{(P3)}]   $P^k_{-}$ is a cone of rank $k$.
    \item[\textbf{(P4)}]  $P^k_-$ is $k$-solid. 
\end{enumerate}

Fix  a matrix~$A\in\R^{n\times n}$ with positive   entries. 
By the Perron-Frobenius theory (see, e.g.,~\cite{matrx_ana}),  $A $  has special spectral properties. For example, $A$ admits a positive eigenvector~$v$ and thus maps the one-dimensional 
linear subspace~$W^1:=\spanop(v)$ to itself.  The matrix~$A$ also maps the cone of rank one~$P^1_-$ to itself, and maps~$P^1_-\setminus\{0\}$ to~$P^1_+= \Int(P^1_-)$.  
More generally, 
linear operators that map a cone of rank~$k$ to its interior induce a decomposition of~$\mathbb{R}^n$ into a direct sum of invariant subspaces of prescribed dimension, and satisfy a spectral gap condition, as shown by the next theorem. 
\begin{Theorem}\citep{fusco_oliva_1991}\label{thm:fusco}
Let~$S\subseteq \R^n$ be a cone of rank~$k$ with a non-empty interior, and suppose that the matrix~$A\in\R^{n\times n} $ maps~$S\setminus \left\{0 \right\}$ to~$\Int(S)$. Then, there exist unique linear subspaces~$W^1,W^2\subset\R^n$ such that:
    \begin{enumerate}
                \item $W^1 \cap W^2=\{0\}$,  $W^1 \setminus\{0\} \subseteq \Int(S)$,  $W^2\cap S=\{0\}$;
\item $\dim(W^1)=k$, $\dim(W^2)=n-k$; 
        \item $AW^1\subseteq W^1$ and $AW^2\subseteq W^2$.
     \end{enumerate}
    Furthermore, consider the restriction of the linear operator $A$ to the subspace $W^i$, denoted as $A\vert_{W^i}$, for $i=1,2$. Any~$\lambda\in\sigma_1(A)$ and~$\mu \in \sigma_2(A)$, where~$\sigma_i(A)$ is the spectrum of~$A\vert_{W^i}$, 
      satisfy the spectral gap condition
      $|\lambda|>|\mu|$.
%%%%%%%%%%%%%%%%%%%%%%%%%%%      
\end{Theorem}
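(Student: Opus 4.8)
The plan is to transfer the statement, via the $k$-th multiplicative compound matrix $A^{(k)}$ (equivalently, the action of $A$ on the Pl\"ucker embedding of the Grassmannian of $k$-planes), to an ordinary Perron--Frobenius statement for a \emph{proper} cone. Work in $\bigwedge^k\R^n\cong\R^{\binom{n}{k}}$, where $A^{(k)}$ acts on decomposable elements by $A^{(k)}(v_1\wedge\cdots\wedge v_k)=(Av_1)\wedge\cdots\wedge(Av_k)$, i.e.\ on a $k$-plane $V=\spanop(v_1,\dots,v_k)$ via $V\mapsto AV$. A preliminary remark: for every $k$-plane $V$ with $V\setminus\{0\}\subseteq S$ the restriction $A|_V$ is injective, since $Av=0$ with $0\ne v\in V\subseteq S$ would force $Av\in\Int(S)$; hence $A$ sends such a $V$ to a $k$-plane $AV$ with $AV\setminus\{0\}\subseteq\Int(S)$, and \emph{no invertibility of $A$ is required}. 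Let $\widehat S\subseteq\bigwedge^k\R^n$ be the \emph{$k$-th compound cone} of $S$: the closed cone generated by the decomposable vectors $v_1\wedge\cdots\wedge v_k$ for which $\spanop(v_1,\dots,v_k)$ is a $k$-plane $V$ with $V\setminus\{0\}\subseteq S$.

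The core step is a lemma: the hypotheses on $S$ (closed cone, rank exactly $k$, nonempty interior, $k$-solid) force $\widehat S$ to be a proper cone inside $\spanop(\widehat S)$ — closed, pointed, with nonempty relative interior — and $A^{(k)}$ to map $\widehat S\setminus\{0\}$ into its relative interior; for $S=P^k_-$ one has $\widehat S=\bigwedge^k\R^n$, a genuine solid pointed cone. Granting this, the classical Perron--Frobenius / Krein--Rutman theorem applied to $A^{(k)}$ on $\widehat S$ produces a simple eigenvalue $\rho>0$ of strictly maximal modulus among the eigenvalues of $A^{(k)}$, with one-dimensional eigenspace spanned by some $\xi$ in the relative interior of $\widehat S$. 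Since $A^{(k)}$ preserves decomposability and the normalized iterates $(A^{(k)})^m\eta/\|(A^{(k)})^m\eta\|$ of a decomposable $\eta\in\widehat S$ converge to a multiple of $\xi$, the vector $\xi$ inherits the Pl\"ucker relations and is decomposable, $\xi=w_1\wedge\cdots\wedge w_k$. Put $W^1:=\spanop(w_1,\dots,w_k)$: then $\dim W^1=k$, $AW^1\subseteq W^1$ because $A^{(k)}\xi=\rho\xi$, and $\xi$ lying in the relative interior of $\widehat S$ translates into $W^1\setminus\{0\}\subseteq\Int(S)$.

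It remains to extract $W^2$, the direct-sum decomposition and the spectral gap. Order the eigenvalues of $A$ (with multiplicity) as $|\lambda_1|\ge\cdots\ge|\lambda_n|$; the eigenvalues of $A^{(k)}$ are exactly the products $\lambda_{i_1}\cdots\lambda_{i_k}$, $1\le i_1<\cdots<i_k\le n$. Maximality \emph{and} simplicity of $\rho$ force $\rho=\lambda_1\cdots\lambda_k$ together with the strict separation $|\lambda_k|>|\lambda_{k+1}|$, and they identify $W^1$ with the sum of the generalized eigenspaces of $A$ for $\lambda_1,\dots,\lambda_k$ (uniqueness of the Perron eigenvector $\xi$ gives uniqueness of $\bigwedge^kW^1$, hence of $W^1$). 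Define $W^2$ as the sum of the remaining generalized eigenspaces, for $\lambda_{k+1},\dots,\lambda_n$; then $\dim W^2=n-k$, $AW^2\subseteq W^2$, $W^1\cap W^2=\{0\}$ and $\R^n=W^1\oplus W^2$ by the generalized eigenspace decomposition, which also makes $W^2$ the unique $A$-invariant complement of $W^1$ once the gap is known. Taking $\sigma_1(A)=\{\lambda_1,\dots,\lambda_k\}$ and $\sigma_2(A)=\{\lambda_{k+1},\dots,\lambda_n\}$ gives $|\lambda|\ge|\lambda_k|>|\lambda_{k+1}|\ge|\mu|$ for all $\lambda\in\sigma_1(A)$, $\mu\in\sigma_2(A)$. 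Finally $W^2\cap S=\{0\}$: if $0\ne v\in W^2\cap S$ then $A^mv\in\Int(S)$ for all $m\ge1$ while $A^mv\in W^2$ keeps a fixed positive angle from $W^1$; completing $Av$ to a $k$-frame in $\Int(S)$ and iterating the induced map $V\mapsto AV$ on $\mathcal K:=\{V\in\operatorname{Gr}_k(\R^n)\ :\ V\setminus\{0\}\subseteq S\}$ — which, by the spectral gap, drives every element of $\mathcal K$ to $W^1$ — contradicts this; equivalently, one invokes the dual statement that $\bigwedge^{n-k}W^2$ lies in the interior of the compound cone attached to a cone of rank $n-k$ associated with $S$.

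The decisive obstacle is the lemma of the second paragraph: that $\widehat S$ is a proper cone carried into its interior by $A^{(k)}$, i.e.\ that the family of $k$-planes contained in $S$ is compact and topologically trivial. This is exactly where the structural hypotheses ``rank $k$'', ``$k$-solid'' and ``nonempty interior'' are used in an essential way, and it is substantially more delicate than the subsequent, essentially classical, Perron--Frobenius argument. If one prefers to avoid compounds, the same input lets one run a Brouwer fixed-point argument for $V\mapsto AV$ directly on $\mathcal K$ (needing that $\mathcal K$ is contractible) and build $W^2$ as a fixed point of the preimage map $U\mapsto\{v\ :\ Av\in U\}$ on the compact set of $(n-k)$-planes that avoid $\Int(S)$.
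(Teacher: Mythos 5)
The paper imports Theorem~\ref{thm:fusco} verbatim from Fusco and Oliva (1991) without reproducing a proof, so there is no in-paper argument to compare against; the proposal has to stand on its own, and it does not. You flag the deficiency yourself as the ``decisive obstacle'': the central lemma---that $\widehat S\subseteq\bigwedge^k\R^n$ is a proper (closed, pointed, solid) cone in its span, and that $A^{(k)}$ carries $\widehat S\setminus\{0\}$ into its relative interior---is never proved. Everything downstream (Krein--Rutman, simplicity of the dominant eigenvalue, decomposability of the Perron vector, the spectral gap $|\lambda_k|>|\lambda_{k+1}|$, the construction and uniqueness of $W^2$) is conditional on that lemma, so the theorem is not established.

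Worse, the lemma is in fact \emph{false} under your own definition of $\widehat S$. The decomposable vector $v_1\wedge\cdots\wedge v_k$ representing a $k$-plane $V$ is determined only up to a nonzero scalar, so both $\xi$ and $-\xi$ are generators of $\widehat S$; the closed convex cone they generate contains a line and can never be pointed. Your illustrative claim that for $S=P^k_-$ one gets $\widehat S=\bigwedge^k\R^n$, ``a genuine solid pointed cone,'' is self-contradictory, since the whole exterior power is not pointed. The version that does work is special to $P^k_-$: by the Gantmacher--Krein characterization, a $k$-plane all of whose nonzero vectors have at most $k-1$ weak sign changes has Pl\"ucker coordinates of a single weak sign, so one may orient the generators into $\R^{\binom{n}{k}}_{\geq 0}$ and land in a proper cone there. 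That orientation has no analogue for an arbitrary cone of rank $k$, which is the generality the theorem requires. Your argument that $W^2\cap S=\{0\}$, and that $W^2$ is unique, is likewise only gestured at. If you want to avoid compounds, make precise the fixed-point argument you mention in passing---$V\mapsto AV$ on the compact set $\mathcal K:=\{V\in\operatorname{Gr}_k(\R^n)\ :\ V\setminus\{0\}\subseteq S\}$---and actually establish that $\mathcal K$ has the topology a Brouwer/Lefschetz argument needs; that, rather than compound cones, is the route that generalizes beyond $P^k_-$ and is closer in spirit to the cited source.
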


The next example demonstrates  that Theorem~\ref{thm:fusco} is a generalization  of the Perron-Frobenius theorem.
\begin{Example}
 %%%
%%%%
Fix a matrix~$A\in\R^{n\times n}$ 
with positive entries. 
Let~$S:=P^1_- \subset \mathbb{R}^n$. This is a cone of rank~$1$  with a non-empty interior, and~$A$ maps~$S\setminus\{0\}$ to~$\Int(S)=P^1_+  $.  
 Applying  Theorem~\ref{thm:fusco} implies that~$\R^n$
can be partitioned into  a one-dimensional subspace~$W^1$, such that~$W^1\setminus\{0\} \subseteq P^1_+$,
and an~$(n-1)$-dimensional
subspace~$W^2$.
Hence,~$W^1=\spanop(v)$, where~$v$ is a vector whose entries are all positive. Furthermore,  since~$A$ maps~$W^1$ to~$W^1$, we have that~$Av =\lambda v$ with~$\lambda>0$, and the spectral gap condition in Theorem~\ref{thm:fusco} implies that~$\lambda>|\mu|$ for any other eigenvalue~$\mu$ of~$A$. 
The fact that~$W^1=\spanop(v)$ is unique  implies that
  the eigenvector~$v$ is unique, up to multiplication by a scalar.
This recovers the Perron theorem. Now, if~$A$ is an irreducible matrix with non-negative entries, then~$\exp(A)$ is a matrix with positive entries and the analysis can be extended to this case as well.
%%%
%%%%
\end{Example}

\subsection{$k$-positive linear ODE systems}
Let us recall the notion of (strong) $k$-positivity for linear time-varying ordinary-differential-equation (ODE) systems.
\begin{Definition}\label{def:k_posi}
The linear time-varying~(LTV) system
\begin{align}\label{eq:LTV}
    \dot{x}(t)&=A(t)x(t), \\
    x(t_0)&=x_0,\nonumber
\end{align}
  with fundamental solution matrix $T(t,t_0)$,
is called~\emph{$k$-positive} if $T(t,t_0)P_-^k \subseteq P_-^k$ for all $t\geq t_0$, namely, the flow induced by \eqref{eq:LTV} maps~$P^k_-$ to itself. The system is called \emph{strongly $k$-positive}  if $T(t,t_0)\left(P^k_-\setminus\{0\}\right) \subseteq P_+^k$ for all $t>t_0$, namely, the flow induced by \eqref{eq:LTV} maps~$P^k_-\setminus\{0\}$ to~$P^k_+=\operatorname{int}(P^k_-)$.
%%%%%%%%%%%%%%%%%%%
\end{Definition}
%%%%%%%%%%%%%%%%%
 Suppose that   $A(t)$ in~\eqref{eq:LTV} is   piecewise continuous. Then the conditions for~$k$-positivity of the LTV system~\eqref{eq:LTV} depend only on the sign structure of~$A(t)$. More precisely, 
 the system~\eqref{eq:LTV} is~$k$-positive, with $k=1,2$, if and only if, for all~$t\geq t_0$, $A(t)$ has the sign pattern $\bar{A}_k$, where
\begin{equation}\label{eq:sign_2_posi}
 \bar{A}_1:= \begin{bmatrix}
    *&\geq 0&\geq 0&\dots&\geq 0&\geq 0\\
     \geq 0&*  &\geq 0&\dots&\geq 0&\geq 0\\
     \geq 0&\geq 0  &*&\dots&\geq 0&\geq 0\\
    \vdots & \vdots & \vdots & \ddots & \vdots &\vdots\\
    \geq 0&\geq 0&\geq 0&\dots&*&\geq 0\\
    \geq 0&\geq 0&\geq 0&\dots&\geq 0&*
\end{bmatrix} \quad \mbox{and} \quad \bar{A}_2:=  \begin{bmatrix}
    * & \geq 0& 0&\dots  &0&\leq 0  \\
    \geq 0& * & \geq 0&\dots&0&0\\
    0& \geq 0 & *& \dots&0&0\\
    \vdots & \vdots & \vdots & \ddots & \vdots &\vdots\\
     0& 0 &   0& \dots&*&\geq 0\\
    \leq 0& 0 &0&    \dots&\geq 0&*
\end{bmatrix}.
\end{equation}
System \eqref{eq:LTV} is \emph{strongly}~$2$-positive if, in addition, matrix~$A(t)$ is irreducible almost everywhere~\citep{WEISS_k_posi} (see also~\citep{ARCAK_D_STAB}). 

The sign pattern $\bar{A}_1$ corresponds to a Metzler matrix, while a matrix with sign pattern $\bar{A}_2$ is not Metzler in general. 
A sign pattern similar to $\bar{A}_2$ in~\eqref{eq:sign_2_posi}
 appears in the seminal work of   \cite{poin_cyclic}   on monotone cyclic feedback systems (see also~\citep{cyclic_sign_vari,FENG2021858,wang_cyclic_feedback}, and the references therein for more recent results).

\begin{Remark}
Systems of the form $\dot x(t)=A x(t)$ with~$A\in\bar A_2 $  often arise in the field of  multi-agent systems~\citep{wooldridge2009introduction}, where the agents $\left\{x_i\right\}_{i=1}^n$ are interconnected through a ring topology with bidirectional links. 
Each agent $x_i$, $i\in\{2,\dots,n-1\}$ receives arbitrarily signed feedback from itself, and non-negative feedback from its two neighbors~$x_{i-1}$ and~$x_{i+1}$. Agent~$x_1$  receives  arbitrary signed feedback from itself,   non-negative feedback from its right neighbor~$x_2$ and non-positive feedback from its ``cyclic left'' neighbor~$x_n$. Agent~$x_n$  receives  arbitrary signed feedback from itself,   non-negative feedback from its left neighbor~$x_{n-1}$ and non-positive feedback from its ``cyclic right'' neighbor~$x_1$.
Since~$A$ may include negative off-diagonal entries, the system~$\dot x=Ax$ is in general not cooperative. 
\end{Remark}

We note in passing that the analysis of sign patterns guaranteeing~$k$-positivity relies on the theory of compound matrices (see, e.g.,~\citep{comp_long_tutorial}), a fundamental tool also for $k$-contractive~\citep{kordercont}, 
$\alpha$-contractive~\citep{Hausdorff_contract}, and totally positive differential systems~\citep{fulltppaper}. 

\subsection{$k$-cooperative nonlinear ODE systems}
Consider the time-invariant nonlinear ODE system
\begin{equation}\label{eq:nonlinear}
 \dot x=f(x),\quad t\geq 0, 
\end{equation}
and assume that its  solutions evolve on a convex state space $\Omega\subseteq\R^n$. Assume  also that there exists a~$\varepsilon>0$ such that~$f\in C^1(\Omega_{\varepsilon})$, where~$\Omega_{\varepsilon}: = \Omega+B(0,\varepsilon)$,  is an $\varepsilon$-neighborhood of $\Omega$, and that for all initial conditions $a\in\Omega$ the system admits a unique solution~$x(t,a)\in\Omega$ for all~$t\geq 0$. Denote the Jacobian of the vector field  by $J_f(x):=\frac{\partial}{\partial x}f(x)$. 

Given two initial conditions~$a,b\in\Omega$, let~$z(t):=x(t,a)-x(t,b)$. Then 
%%%%%%%%%%%%%%%%%
\begin{equation}\label{eq:variational}
  \dot{z}(t)=M_{a,b}(t)z(t),
 \end{equation}
where 
\[
M_{a,b}(t):= \int_0^1 J_f\big( rx(t,a)+(1-r)x(t,b) \big ) \mathrm{d}r.
\]
 %%
%%%
 %%%
 The linear time-varying (LTV) system~\eqref{eq:variational} is the  variational equation associated with system~\eqref{eq:nonlinear}. If~$J_f(x)$ has a given sign pattern for all~$x\in\Omega$, then~$M_{a,b}(t)$ has the same sign pattern for all~$t\geq 0$ and for all~$a,b\in \Omega$, because sign patterns are preserved under summation and thus integration. 
\begin{Definition}\label{def:non_k_coop} \citep{WEISS_k_posi}
The nonlinear system \eqref{eq:nonlinear} is  called (\emph{strongly}) \emph{$k$-cooperative} if the associated variational equation \eqref{eq:variational} is (strongly)
$k$-positive for all $a,b\in\Omega$ and all $t\geq 0$.  %%%
\end{Definition}
For example, the nonlinear system \eqref{eq:nonlinear} is $1$-cooperative if the variational  equation \eqref{eq:variational} is $1$-positive, i.e.,   if $J_f(x)$ is Metzler for all $x\in \Omega$. Thus, $1$-cooperativity  is exactly cooperativity. 

\begin{Remark} \label{rem:with_zero1}
Suppose that    the system \eqref{eq:nonlinear}
is strongly $k$-cooperative, and that~$0\in \Omega$ is an equilibrium
of~\eqref{eq:nonlinear}. Fix an initial condition~$a\in\Omega\setminus\{0\}$, and let~$z(t):=x(t,a)-x(t,0)=x(t,a)$.    Considering equation~\eqref{eq:variational}, Definitions~\ref{def:k_posi}
and~\ref{def:non_k_coop} imply  that if~$s^-(x(t,a))\leq k-1$ at some time~$t\geq 0$, then~$s^+(x(\tau,a))\leq k-1$ for all~$\tau>t$. In particular,   $P^k_-$, which is a cone of rank~$k$,  is invariant under the dynamics of the nonlinear system~\eqref{eq:nonlinear}.
%%%%%%%%%%%%%%%%%%%%%%%%%%%%%
%%%%%%%%%%%%%%%%%%%%%%%%%%
\end{Remark}

For  an initial condition $a\in\Omega$, let $\omega(a)$ denote the $\omega$-limit set of $a$,
namely, the set of all points~$y \in \Omega$ for which there exists a sequence of times~$0\leq t_1<t_2<\dots$, with $\lim_{n \to \infty} t_n = +\infty$, such that $\lim_{n \to \infty} x(t_n,a) = y$; see e.g. \cite[p. 193]{diff_eqn_limit_sets}. 
This concept is fundamental when defining the \Poincare-Bendixson  property.

\begin{Definition}\label{Def:PBProp}
%%%
Consider the dynamical system \eqref{eq:nonlinear} and let $\mathcal{E}$ denote its set of equilibria. System \eqref{eq:nonlinear} satisfies the 
\emph{strong \Poincare-Bendixson  property}
if,  for any bounded solution $x(t,a)$, with $a\in\Omega$, it holds that 
\begin{equation*}
\omega(a) \cap \mathcal{E}=\emptyset \implies \omega(a) \ \text{is a periodic orbit}.
\end{equation*} 
\end{Definition}
This property is well known to hold for autonomous planar dynamical systems.
Also strongly $2$-cooperative systems satisfy the  strong \Poincare-Bendixson  property~\citep{WEISS_k_posi}; therefore, establishing strong $2$-cooperativity 
provides important information on the asymptotic behavior of the nonlinear system \eqref{eq:nonlinear}.

\section{Main result}\label{sec:main_res}
%%%%%%%%%%%%%%%%%%%%%%%%%%%%%%%%%%%%%%

%%%%%%%%%%%%%%%%%
We provide a sufficient condition for  the existence of  at least one (non-trivial) periodic orbit for strongly $2$-cooperative nonlinear systems. Furthermore, we explicitly  characterize a positive-measure
set of initial conditions such that  all solutions emanating from this set converge to a (non-trivial) periodic orbit. 
%%%%%%%%%%%%%%%%%%%%
\begin{Theorem}\label{thm:main_n_dim}
Consider a strongly 2-cooperative nonlinear time-invariant system
\be\label{eq:nonlin3}
\dot x=f(x), \quad t\geq 0,
\ee
with $f:\R^n\to\R^n$. Let $\Omega\subset \mathbb{R}^n$ be an open, bounded and convex such that 
$f\in C^2(\Omega_{\varepsilon})$, and such that for any
initial condition~$a\in\Omega$ the system admits a unique solution~$x(t,a)\in\Omega$ for all~$t\geq 0$. Suppose that~$0\in \Omega$ is a unique equilibrium of~\eqref{eq:nonlin3} in $\Omega$, and  that $J_f(0) = \frac{\partial}{\partial x}f(0)$ has at least two eigenvalues with a positive real part. Partition $\Omega$ as the   disjoint union 
\[
\Omega=\Omega_{\leq 1} \uplus \Omega_{\geq 2},
\]
where  
\begin{equation}\label{eq:OmegaDecomp}
\begin{array}{lll}
     \Omega_{\leq 1} := \Omega \cap P^2_-  \text{ and }   \Omega_{\geq 2}:= \Omega \setminus P^2_-.
\end{array}
\end{equation}
Then, for any~$a\in \Omega_{\leq 1}\setminus \left\{0 \right\}$,  the solution $x(t,a)$ of \eqref{eq:nonlin3} converges to a (non-trivial)  periodic orbit as $t\to \infty$.
\end{Theorem}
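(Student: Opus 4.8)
\emph{Proof plan.} The plan is to show that, after discarding an initial transient, the trajectory lies in $P^2_+$ and stays away from the unique equilibrium $0$, and then to invoke the strong \Poincare--Bendixson property. Since $a\in\Omega\cap P^2_-=\Omega_{\leq 1}$, taking $x(t,0)\equiv 0$ in Remark~\ref{rem:with_zero1} gives $x(t,a)\in P^2_+=\Int(P^2_-)$ for every $t>0$; replacing $a$ by $x(t_0,a)$ for some $t_0>0$, I may assume $a\in\Omega\cap P^2_+$, and $x(t,a)\neq 0$ for all $t$ (by backward uniqueness, as $0$ is an equilibrium). Because $\Omega$ is bounded and forward invariant, $\omega(a)$ is non-empty, compact, connected and invariant, and $\omega(a)\subseteq P^2_-$ since $P^2_-$ is closed and contains the whole trajectory tail. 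As strongly $2$-cooperative systems enjoy the strong \Poincare--Bendixson property (Definition~\ref{Def:PBProp}) and $0$ is the only equilibrium in $\Omega$, it suffices to prove that $0\notin\omega(a)$: then $\omega(a)$ is a periodic orbit, necessarily non-trivial, and since $x(t,a)$ is bounded it converges to $\omega(a)$.

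\emph{The linearisation.} The instability hypothesis enters through $A:=J_f(0)$. By strong $2$-cooperativity, $A$ has sign pattern $\bar A_2$ and is irreducible, so the autonomous system $\dot y=Ay$ is strongly $2$-positive and $\exp(At)(P^2_-\setminus\{0\})\subseteq P^2_+$ for all $t>0$. Since $P^2_-$ is a cone of rank $2$ with non-empty interior (properties (P1), (P3), (P4)), Theorem~\ref{thm:fusco} applies to each $\exp(At)$, $t>0$; by the uniqueness statement therein the resulting decomposition is independent of $t$, so we obtain $A$-invariant subspaces $\R^n=W^1\oplus W^2$ with $\dim W^1=2$, $W^1\setminus\{0\}\subseteq P^2_+$, $W^2\cap P^2_-=\{0\}$, together with the spectral gap $|\lambda|>|\mu|$ for $\lambda\in\sigma_1(A)$, $\mu\in\sigma_2(A)$. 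Transporting this through $\exp(\cdot)$, $\sigma_1(A)$ consists exactly of the two dominant eigenvalues of $A$, i.e.\ those of largest real part; since $A$ has at least two eigenvalues with positive real part, both of these lie in the open right half-plane. Hence $A|_{W^1}$ is anti-stable (expanding), while every stable or centre mode of $A$ is confined to $W^2$.

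\emph{The repulsion estimate (the crux).} Combining (i) the exponential expansion $|\exp(At)w|\ge C_1^{-1}e^{\beta t}|w|$ on $W^1$, valid for any $0<\beta<\min\{\operatorname{Re}\lambda:\lambda\in\sigma_1(A)\}$, (ii) the strictly positive opening angle between $W^1$ and $W^2$, which gives $|u+v|\ge\delta|u|$ for $u\in W^1$, $v\in W^2$, and (iii) the bound $|\Pi_1 y|\ge c|y|$ for all $y\in P^2_-$, where $\Pi_1$ is the projection onto $W^1$ along $W^2$ and $c>0$ exists because $|\Pi_1\cdot|$ is continuous and strictly positive on the compact set $P^2_-\cap S^{n-1}$ (here $W^2\cap P^2_-=\{0\}$ is used), one obtains
\[
|\exp(At)y|\;\ge\;K e^{\beta t}|y|\qquad\text{for all }y\in P^2_-,\ t\ge 0,\quad K,\beta>0.
\]
Substituting the variation-of-constants formula $x(t,b)=\exp(At)b+\int_0^t\exp(A(t-s))\bigl(f(x(s,b))-Ax(s,b)\bigr)\diff s$, using the $C^2$ remainder bound $|f(x)-Ax|\le L|x|^2$ near $0$, and using that $x(s,b)\in P^2_-$ for all $s\ge 0$ (forward invariance), a Gronwall-type argument should yield a radius $\rho>0$ and constants $\lambda>0$, $C\ge 1$ such that every trajectory issuing from $b\in(P^2_-\cap\overline{B(0,\rho)})\setminus\{0\}$ leaves $\overline{B(0,\rho)}$ in finite time, with $|x(t,b)|\ge C^{-1}e^{\lambda t}|b|$ up to the first exit. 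I expect this to be the main obstacle: the eigenvalues in $\sigma_1(A)$ may have a spread of positive real parts, so one cannot simply differentiate $|x|^2$; instead one must localise and exploit that, near $0$, $P^2_-$ is ``squeezed onto $W^1$'' and therefore stays uniformly away from the stable and centre directions collected in $W^2$ — lower semicontinuity of $s^-$ (Lemma~\ref{lem:cont_s_minus}) provides a neighbourhood of those directions on $S^{n-1}$ disjoint from $P^2_-$, which also rules out small invariant sets (e.g.\ orbits on a centre manifold) lying in $\overline{B(0,\rho)}\cap P^2_-$.

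\emph{Conclusion.} Suppose $0\in\omega(a)$ and pick $t_k\to\infty$ with $x(t_k,a)\to 0$, so eventually $|x(t_k,a)|<\rho$. If the trajectory is eventually contained in $\overline{B(0,\rho)}$, the finite-exit part of the repulsion estimate, applied with base point $x(T,a)\neq 0$, is violated. Otherwise, let $s_k<t_k$ be the last time with $|x(s_k,a)|=\rho$; on $(s_k,t_k]$ the trajectory lies in $B(0,\rho)\cap P^2_+\subseteq B(0,\rho)\cap P^2_-$, so the lower bound started at $s_k$ gives $|x(t_k,a)|\ge C^{-1}\rho$, contradicting $x(t_k,a)\to 0$. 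Hence $0\notin\omega(a)$, and by the strong \Poincare--Bendixson property $\omega(a)$ is a periodic orbit, which is non-trivial since it omits the unique equilibrium $0$; as $x(t,a)$ is bounded, $\dist(x(t,a),\omega(a))\to 0$, i.e.\ $x(t,a)$ converges to a non-trivial periodic orbit.
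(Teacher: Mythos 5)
Your plan, at the top level, matches the paper exactly: use the rank-$2$ spectral decomposition $\R^n=W^1\oplus W^2$ from Theorem~\ref{thm:fusco} (the paper records this as Proposition~\ref{lem:hurwitz_and_2_coop}), the conic estimate that $P^2_-$ stays uniformly away from $W^2$, show the trajectory is repelled from the origin, and finish with the strong \Poincare--Bendixson property. Your observation that $|\Pi_1 y|\ge c|y|$ on $P^2_-$ (by compactness of $P^2_-\cap S^{n-1}$ and $W^2\cap P^2_-=\{0\}$) is the same fact the paper proves as Lemma~\ref{lemm:AngularSep}/Corollary~\ref{Cor:CoordWeights}, and your reduction to ``$0\notin\omega(a)$ suffices'' and the two-case argument in the Conclusion are also essentially the paper's Step~4.

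Where you diverge is in the crux — the repulsion estimate near the origin — and that is where the real gap is. You correctly note you cannot simply differentiate $|x|^2$ because $A|_{W^1}$ need not have positive-definite symmetric part, and you go to a variation-of-constants/Gronwall argument. But the estimate you write down does not close as a direct Gronwall inequality: the lower bound on $|\exp(At)y|$ for $y\in P^2_-$ grows at rate $\beta<\min\{\real\lambda:\lambda\in\sigma_1(A)\}=\real\lambda_2$, while the integral term must be bounded by the \emph{full} semigroup norm, which grows at rate $\gamma\ge\real\lambda_1\ge\real\lambda_2>\beta$. Subtracting $\mathrm{const}\cdot\rho\int_0^t e^{\gamma(t-s)}|u(s)|\,\diff s$ from $K e^{\beta t}|u(0)|$ does not produce a positive, exponentially growing lower bound for all $t$, because the correction term eventually overtakes. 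This is the obstacle you flag yourself, and ``a Gronwall-type argument should yield'' does not discharge it. Two ways to fix it: (a) iterate on fixed windows $[kT_0,(k+1)T_0]$, choosing $T_0$ so that $K e^{\beta T_0}\ge 2$ and then $\rho$ small enough that the correction is $<\tfrac12 K$ on that window, giving $|u((k+1)T_0)|\ge\tfrac32|u(kT_0)|$ and hence eventual exit with a uniform lower bound up to exit; or (b) replace $|u|^2$ by $u^\top Qu$ with $Q\succ0$ solving the Lyapunov equation $A_1^\top Q+QA_1=I$ (solvable since $-A_1$ is Hurwitz), so that $\dot V$ is immediately bounded below without any rate mismatch. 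The paper's Step~1 ($\delta$-scaling of the real Jordan block via $S_\delta$) is precisely a hands-on construction of such a $Q$: it makes $(\tilde\Lambda_\delta+\tilde\Lambda_\delta^\top)/2$ positive-definite, after which the quadratic Lyapunov function $V(q)=(q_1^2+q_2^2)/2$ works in one shot, with the conic estimate controlling the cubic remainder. So your approach is feasible but, as written, the crucial inequality is asserted rather than proved; either spell out the windowed iteration or adopt a Lyapunov-matrix (or $\delta$-scaling) argument to make the step rigorous.
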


Intuitively speaking, $ \Omega_{\leq 1} = \Omega \cap P^2_- $ is the part of~$\Omega$ with a ``small'' number of sign variations, whereas~$\Omega_{\geq 2}:= \Omega \setminus P^2_-$ is the part with a ``large'' number of sign variations.  The proof of  Theorem~\ref{thm:main_n_dim} is based on showing  that solutions emanating from~$\Omega_{\leq 1}$ do not   converge to the (unique) equilibrium, whence are guaranteed to converge to a periodic orbit, due to the strong \Poincare-Bendixson  property.

The assumption that the equilibrium~$e$ is at the origin is not restrictive, and can be achieved by a coordinate shift.
If $e\neq 0$,
the statement of Theorem~\ref{thm:main_n_dim} becomes:

\vspace{0.05cm}

\emph{Suppose that~$e\in \Omega$, $e \neq 0$, is a unique equilibrium of~\eqref{eq:nonlin3} in $\Omega$, and  that $J_f(e) := \frac{\partial}{\partial x}f(e)$ has at least two eigenvalues with a positive real part. Then, for any $a\in \Omega\setminus \{ e\}$ such that~$s^-(a-e)\leq 1$,
the solution $x(t,a)$ of \eqref{eq:nonlin3} converges to a (non-trivial)  periodic orbit as~$t\to \infty$.
}

\vspace{0.05cm}

It is important to emphasize that Theorem~\ref{thm:main_n_dim} offers an \textit{explicit description}, in terms of sign variations, of initial conditions yielding convergence to a (non-trivial)  periodic orbit. 
%The proof of Theorem~\ref{thm:main_n_dim} is provided in Section~\ref{sec:proof}.

\section{Proof of Theorem~\ref{thm:main_n_dim}}\label{sec:proof}
%%%%%%%%%%%%%%%%%%%
The proof requires an auxiliary result describing  some spectral properties of strongly $2$-cooperative matrices
that may be of independent interest.

%%%%%%%%
%%%%%%%%%%%%%%%%%%%%
\begin{Proposition}\label{lem:hurwitz_and_2_coop}
Assume that  the system $\dot x=Ax$, with $A\in\R^{n\times n}$ and~$n\geq 3$, 
is strongly $2$-positive. Then there exist unique subspaces~$W^1,W^2\subset\R^n$ such that:   
\begin{enumerate}
    \item  
  $W^1 \cap W^2=\{0\}$,\ $W^1\setminus\{0\} \subseteq P^2_+ $,\  $W^2\cap P^2_-=\{0\}$;
\item $\dim(W^1)=2$, $\dim(W^2)=n-2$; 
        \item $AW^1\subseteq W^1$ and $AW^2 \subseteq W^2$.\label{enum:aww}
\end{enumerate}
Furthermore,   order  the eigenvalues of $A$ such that complex conjugate eigenvalues appear consecutively (including multiplicities), and  
\begin{equation}\label{eq:order_eig_stab}
\real(\lambda_1) \geq\dots\geq \real(\lambda_n)  .  
\end{equation} 
Then 
\begin{equation}\label{eq:gap}
\real(\lambda_2)>\real(\lambda_3),
\end{equation}
and $\sigma(A\vert_{W^1}) = \left\{\lambda_1 ,\lambda_2\right\}$.
\end{Proposition}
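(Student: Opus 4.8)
The plan is to deduce the existence and properties of $W^1,W^2$ directly from Theorem~\ref{thm:fusco} applied to an appropriate linear operator built from the flow of $\dot x = Ax$. Since the system is strongly $2$-positive, the fundamental solution matrix satisfies $T(t,0)(P^2_-\setminus\{0\}) \subseteq P^2_+ = \Int(P^2_-)$ for every $t>0$. By \textbf{(P3)} and \textbf{(P4)}, $P^2_-$ is a $2$-solid cone of rank~$2$ with non-empty interior. Fix any $t_* > 0$ and set $B := T(t_*,0) = \exp$-type operator (more precisely the time-$t_*$ flow, which for the autonomous case is $e^{At_*}$). Then $B$ maps $P^2_-\setminus\{0\}$ into $\Int(P^2_-)$, so Theorem~\ref{thm:fusco} applies with $S = P^2_-$, $k=2$: there are unique subspaces $W^1,W^2$ with $W^1\cap W^2=\{0\}$, $W^1\setminus\{0\}\subseteq \Int(P^2_-)=P^2_+$, $W^2\cap P^2_-=\{0\}$, $\dim W^1 = 2$, $\dim W^2 = n-2$, $BW^i\subseteq W^i$, and a strict spectral gap $|\lambda|>|\mu|$ for $\lambda\in\sigma_1(B)$, $\mu\in\sigma_2(B)$. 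This gives items (1) and (2) immediately, and I must then (a) upgrade invariance under $B = e^{At_*}$ to invariance under $A$, and (b) translate the multiplicative spectral gap on $e^{At_*}$ into the additive gap \eqref{eq:gap} on $A$ and identify $\sigma(A|_{W^1})$.

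For (a): the uniqueness clause in Theorem~\ref{thm:fusco} is the key leverage. For any fixed $s>0$, $T(s,0) = e^{As}$ also maps $P^2_-\setminus\{0\}$ into $P^2_+$, so it has its own pair of invariant subspaces; but these must be the \emph{same} $W^1, W^2$ by uniqueness (the defining properties in item (1)–(2) do not reference the operator). Hence $e^{As}W^i\subseteq W^i$ for all $s>0$; differentiating at $s=0$ gives $AW^i\subseteq W^i$, which is item \ref{enum:aww}. Alternatively, one notes $A$ commutes with $e^{At_*}$, hence $A$ preserves each $e^{At_*}$-invariant subspace provided the decomposition is canonical — but the uniqueness argument is cleaner and avoids spectral-multiplicity subtleties.

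For (b): since $W^1\oplus W^2 = \R^n$ and both are $A$-invariant, $\spec(A) = \sigma(A|_{W^1}) \uplus \sigma(A|_{W^2})$ with $|\sigma(A|_{W^1})|=2$. The eigenvalues of $e^{At_*}$ restricted to $W^i$ are $\{e^{t_*\nu} : \nu \in \sigma(A|_{W^i})\}$, and the gap $|e^{t_*\lambda}| > |e^{t_*\mu}|$ for all $\lambda\in\sigma(A|_{W^1})$, $\mu\in\sigma(A|_{W^2})$ becomes $e^{t_*\real(\lambda)} > e^{t_*\real(\mu)}$, i.e. $\real(\lambda) > \real(\mu)$ for every eigenvalue $\lambda$ of $A|_{W^1}$ and every eigenvalue $\mu$ of $A|_{W^2}$. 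Thus the two eigenvalues sitting in $W^1$ are strictly the two with largest real part among all eigenvalues of $A$; with the ordering \eqref{eq:order_eig_stab} this says exactly $\sigma(A|_{W^1}) = \{\lambda_1,\lambda_2\}$ and $\real(\lambda_2) > \real(\lambda_3) = \max_{\mu\in\sigma(A|_{W^2})}\real(\mu)$, which is \eqref{eq:gap}. One should remark that the requested ordering "with complex conjugates consecutive (including multiplicities)" is consistent here: if $\lambda_2$ were non-real, its conjugate $\bar\lambda_2$ has the same real part, and since $W^1$ is a real $2$-dimensional invariant subspace it must contain a full conjugate pair, forcing $\{\lambda_1,\lambda_2\}=\{\lambda_2,\bar\lambda_2\}$; if $\lambda_2$ is real the two eigenvalues in $W^1$ are real (or $\lambda_1=\bar\lambda_1$ real and $\lambda_2$ real). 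The main obstacle is not any single step but making the passage from the flow operator to the generator $A$ rigorous — in particular handling the case where $A$ has repeated or complex eigenvalues so that "the two eigenvalues of $A|_{W^1}$" is well-defined and the conjugate-pairing in the ordering is honored; the uniqueness statement in Theorem~\ref{thm:fusco} is what makes this clean, since it pins $W^1,W^2$ independently of which time-$s$ map one uses.
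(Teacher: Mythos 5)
Your reduction to Theorem~\ref{thm:fusco} and the spectral-gap translation in part (b) follow the paper's strategy, and part (b) is correct. The gap is in part (a). You claim $W^1(s)$, $W^2(s)$ are $s$-independent ``by uniqueness (the defining properties in item (1)--(2) do not reference the operator).'' But the uniqueness in Theorem~\ref{thm:fusco} is uniqueness \emph{among pairs satisfying all of (1), (2), and (3)}, and (3) is precisely the operator-dependent invariance condition. Conditions (1)--(2) alone are satisfied by a continuum of subspace pairs (any $2$-plane through the open cone $P^2_+$ and any $(n-2)$-plane missing $P^2_-$), so there is no uniqueness without (3). As written, your primary argument does not establish that $W^i(s)=W^i(t_*)$, and hence does not establish $A$-invariance. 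The paper closes this exact gap by first showing that $W^1(T_{n+1})$ is invariant under $B_{T_n}=B_{T_{n+1}}^2$ with $T_n=2^{-n}$ and \emph{then} invoking uniqueness for $B_{T_n}$ to get $W^1(T_n)=W^1(T_{n+1})$; density of dyadic sums and continuity of $t\mapsto e^{At}\zeta$ extend invariance to all $T>0$, after which the difference quotient $h^{-1}(e^{Ah}\zeta-\zeta)$ gives $A$-invariance.

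The alternative you sketch and set aside is actually the cleanest route, and it has no spectral-multiplicity subtleties once the spectral gap is in hand: Theorem~\ref{thm:fusco} applied to $B=e^{At_*}$ gives $|\lambda|>|\mu|$ for every $\lambda\in\sigma(B\vert_{W^1})$, $\mu\in\sigma(B\vert_{W^2})$, so $\sigma(B\vert_{W^1})$ and $\sigma(B\vert_{W^2})$ are disjoint; the primary decomposition then represents the projections onto $W^1$ and $W^2$ as polynomials in $B$, and since $A$ commutes with $B$, $A$ commutes with those projections, giving $AW^i\subseteq W^i$ in a single step. Either fix (the paper's dyadic-time argument or the polynomial-projection commutation argument) is needed; the ``uniqueness of (1)--(2) alone'' shortcut is not available.
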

%%%%%%%%%5
\begin{pf}
%[Proof of Proposition~\ref{lem:hurwitz_and_2_coop}]
%%%%%%%%%%%%%%%%%%%
%%%%%%%%%%%%%%%%%%%
Fix $s>0$.
The eigenvalues of
  the matrix $B_s:=\exp(As)$
  are $\exp(\lambda_i s)$, $i\in[n]$, and~\eqref{eq:order_eig_stab} implies that
\begin{equation}\label{eq:ExpEigOrder}
|\exp(\lambda_1 s)|
\geq |\exp(\lambda_2 s)|
\geq \dots\geq |\exp(\lambda_n s )|,    
\end{equation}
with complex conjugate eigenvalues appearing in consecutive pairs. Since $\dot{x}=Ax$ is strongly $2$-positive, the matrix~$B_s$ (which is the fundamental solution matrix  of $\dot{x}=Ax$)  maps~$P^2_-\setminus\{0\}$ to~$\Int(P^2_-) = P^2_+$. Since~$P^2_-$ is a cone of rank~$2$, applying Theorem~\ref{thm:fusco} yields two \emph{unique} linear subspaces~$W^i(s),\ i=1,2$, which satisfy properties~(1) and~(2) stated in the theorem. Furthermore, $B_s W^i(s)\subseteq W^i(s),\ i=1,2$, and we have the spectral gap condition
\begin{equation}\label{eq:SpecGapExp}
\exp(\real(\lambda_2)s)=|\exp(\lambda_2 s)|
>|\exp(\lambda_3 s)|=\exp(\real(\lambda_3)s),    
\end{equation}
therefore~\eqref{eq:gap} holds.

We now show that the linear subspaces $W^i(s),\ i=1,2$, do not depend on~$s$. It is enough to prove this for~$W^1(s)$, as the proof for~$W^2(s)$ is identical. Define
the sequence~$T_n := 2^{-n}$, $n=1,2,\dots$,
and  let~$W^1_n:=W^1(T_n)$,   denote the corresponding linear subspace. For any $n\geq 1$, we have 
\begin{equation*}
    \exp(A T_n) W^{1}_{n+1} =  \exp(2A T_{n+1}) W^{1}_{n+1}
    = \exp( AT_{n+1})    \exp( AT_{n+1})
    W^1_{n+1}
    \subseteq W^1_{n+1}, 
\end{equation*}
where we used the fact that~$\exp(AT_{n+1}) W^1_{n+1}\subseteq W^1_{n+1}$. Since $W^i_{n+1},\ i=1,2$, also satisfy conditions~(1) and~(2) in the proposition, the uniqueness of $W^1_{n}$ implies that~$ W^1_{n} =W^1_{n+1} $,  for all $n\geq 1$, so we can simply write~$W^1:=W^1_n$.

Fix~$T>0$ and  $\zeta\in W^1$. For any $\epsilon>0$, there exists
a finite sum of elements of $\left\{T_n \suchthat  n=1,2,\dots \right\}$ (with possible repetitions) in the form~$  \sum_{n=1}^{N(\epsilon)} a(\epsilon,n) T_{n}$,
where~$a(\epsilon,n)$ are   
integers,    such that 
\[
\left|T-\sum_{n=1}^{N(\epsilon)} a(\epsilon,n) T_{n} \right|<\epsilon.
\]
Then, by the previous step, 
\begin{equation*}
    \exp\left(A \left(\sum_{n=1}^{N(\epsilon)} a(\epsilon,n) T_{n}\right)\right) \zeta = \left(\prod_{n=1}^{N(\epsilon)} \left(\exp(AT_n) \right)^{a( \epsilon,n)}\right)\zeta\in W^1. 
\end{equation*}
Employing continuity of the  mapping~$t \mapsto \exp(At)\zeta$, we conclude that $\exp(AT)\zeta \in W^1$, so $\exp(AT)W^1\subseteq W^1$. Using again uniqueness  of the subspaces decomposing $\mathbb{R}^n$ into a direct sum, and the fact that~$T$ is arbitrary,   we conclude that the subspaces $W^1 ,W^2 $  do not  depend  on~$T$.

Fix~$h>0$. By the previous arguments, $\exp(Ah)\zeta \in W^1 $ and, since~$W^1$ is a linear subspace,
\[
h^{-1}\left(e^{Ah}\zeta - \zeta \right)\in  W^ 1.
\]
Taking the limit as~$h\downarrow 0$, we conclude that $\dot \zeta = A\zeta\in W^1$, whence $W^1$ is~$A$-invariant; a similar argument shows that~$W^2$ is also $A$-invariant.  In particular, $\mathbb{R}^n $ can be decomposed as the direct sum   of $A$-invariant subspaces~$W^1\bigoplus W^2$, whence~$A$ can be represented as~$A = \operatorname{diag}\left(A\vert_{W^1}, A\vert_{W^2} \right)$. Recalling \eqref{eq:order_eig_stab} and \eqref{eq:gap}, we must have~$\lambda_1,\lambda_2\in \sigma(A\vert_{W^1})$. Indeed, the representation of $A$ yields $\sigma(A) = \sigma(A\vert_{W^1}) \bigcup \sigma(A\vert_{W^2})$. If, for some $i=1,2$, it holds that  $\lambda_i\in \sigma(A\vert_{W^2})$, then $\operatorname{exp}(\lambda_is)\in \sigma \left(B_s \vert_{W^2} \right)$ for $s>0$, which contradicts the spectral gap condition for~$B_s$ in~\eqref{eq:ExpEigOrder} and~\eqref{eq:SpecGapExp}. Since $\operatorname{dim}\left(W^1\right)=2$, we conclude that $\sigma(A\vert_{W^1}) = \left\{\lambda_1,\lambda_2 \right\}$,
and this completes the proof of Proposition~\ref{lem:hurwitz_and_2_coop}.
\end{pf}

The next result follows from Proposition~\ref{lem:hurwitz_and_2_coop}.
\begin{Corollary}\label{Cor:JordA}
Under the assumptions of Proposition \ref{lem:hurwitz_and_2_coop}, there exists
a non-singular matrix $S\in\R^{n\times n}$ such that 
\begin{equation}\label{eq:ortho_basis}
SAS^{-1} = \operatorname{diag}\left(\Lambda,\Psi \right),   
\end{equation}
where $\Lambda\in\R^{2\times 2}$ is  the \emph{real} Jordan form of $A\vert_{W^1}$, and $\Psi\in\R^{(n-2)\times(n-2)}$.
Furthermore,~$\Lambda$ 
 has  one of the following three forms:
 \begin{align*}
(i)\ & \Lambda=\begin{bmatrix}
u_1 & 0\\ 0 & u_2
\end{bmatrix},\ u_1,u_2\in \mathbb{R}, \text{ with } u_1\geq u_2; \\
(ii)\ & \Lambda=\begin{bmatrix}
u & -v \\
v & u
\end{bmatrix},\ u,v\in \mathbb{R}, \text{ with } v\neq 0; \\
(iii)\ & \Lambda=\begin{bmatrix}
    u & 1\\
    0 & u
\end{bmatrix},\ u\in \mathbb{R}. 
\end{align*}
%%%%
\end{Corollary}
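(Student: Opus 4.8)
The plan is to deduce Corollary~\ref{Cor:JordA} directly from Proposition~\ref{lem:hurwitz_and_2_coop} together with standard linear algebra. Proposition~\ref{lem:hurwitz_and_2_coop} already supplies the $A$-invariant direct-sum decomposition $\R^n = W^1 \oplus W^2$ with $\dim(W^1) = 2$, $\dim(W^2) = n-2$, and $\sigma(A|_{W^1}) = \{\lambda_1,\lambda_2\}$. So the job is purely to name a change of basis realizing this decomposition and to put the $2\times 2$ block into real Jordan form.

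First I would choose a basis: pick any basis $\{w^1, w^2\}$ of $W^1$ and any basis $\{w^3,\dots,w^n\}$ of $W^2$, and let $S^{-1}$ be the matrix whose columns are these vectors (in this order). Since $W^1 \cap W^2 = \{0\}$ and their dimensions add to $n$, this $S^{-1}$ is nonsingular. Because both subspaces are $A$-invariant (item~\ref{enum:aww} of the Proposition), $SAS^{-1}$ is block diagonal, $\operatorname{diag}(M_1, \Psi)$, where $M_1 \in \R^{2\times 2}$ is the matrix of $A|_{W^1}$ in the chosen basis of $W^1$ and $\Psi \in \R^{(n-2)\times(n-2)}$ is the matrix of $A|_{W^2}$. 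Now replace the basis $\{w^1,w^2\}$ of $W^1$ by one that brings $M_1$ to its real Jordan form $\Lambda$; this only modifies the top-left block of $S$, leaving the block-diagonal structure intact, so~\eqref{eq:ortho_basis} holds.

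Next I would enumerate the possible real Jordan forms of a real $2\times 2$ matrix. Either (a) $A|_{W^1}$ has two real eigenvalues, in which case it is either diagonalizable, giving form $(i)$, or has a single repeated real eigenvalue with a nontrivial Jordan block, giving form $(iii)$ (the diagonalizable repeated case is the $u_1 = u_2$ instance of $(i)$); or (b) $A|_{W^1}$ has a genuine complex-conjugate pair $u \pm \imagi v$ with $v \neq 0$, whose real Jordan form is $\begin{bmatrix} u & -v \\ v & u\end{bmatrix}$, which is form $(ii)$. Since $\sigma(A|_{W^1}) = \{\lambda_1,\lambda_2\}$ and these were ordered with $\real(\lambda_1)\geq\real(\lambda_2)$, in case $(i)$ (real eigenvalues, so $\lambda_1,\lambda_2 \in \R$) we get $u_1 = \lambda_1 \geq \lambda_2 = u_2$, which justifies the ordering $u_1 \geq u_2$. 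In case $(ii)$ we have $\lambda_{1,2} = u \pm \imagi v$ with $v \neq 0$. This exhausts the possibilities.

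There is essentially no main obstacle here — the result is a direct corollary, and the only things requiring a word of care are: (i) noting that invariance of both subspaces is exactly what forces block-diagonality of $SAS^{-1}$; (ii) recalling the (elementary) classification of real $2\times 2$ matrices up to real similarity; and (iii) matching the eigenvalue ordering from~\eqref{eq:order_eig_stab} to the constraint $u_1 \geq u_2$ in form $(i)$. One minor point worth stating explicitly is that form $(i)$ with $u_1 = u_2$ and form $(iii)$ are the two mutually exclusive sub-cases of a repeated real eigenvalue (diagonalizable vs. not), so the three listed forms are exhaustive and the list is not claimed to be non-redundant across the boundary cases.
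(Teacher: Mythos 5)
Your proof is correct and matches the paper's (implicit) reasoning: the paper gives no formal proof, only the remark that the result follows from Proposition~\ref{lem:hurwitz_and_2_coop} together with a short interpretation of the three forms, and your argument (basis adapted to $W^1\oplus W^2$, block-diagonality from $A$-invariance, then the standard classification of real $2\times 2$ Jordan forms, with the ordering $u_1\geq u_2$ read off from~\eqref{eq:order_eig_stab}) is exactly the expected filling-in of that gap.
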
 
The three possible forms in the corollary correspond to the cases:
 $(i)$~$\lambda_1=u_1$ and $\lambda_2=u_2$ are real and distinct eigenvalues and each has  geometric multiplicity 1,  or 
 $\lambda_1=\lambda_2$ and has geometric multiplicity 2; 
 $(ii)$ $\lambda_1=u+iv$ and~$\lambda_2=\overline{\lambda}_1$ are two complex conjugate eigenvalues; and
 $(iii)$~$\lambda_1=\lambda_2=u$ is a real eigenvalue of algebraic multiplicity~$2$ and geometric multiplicity~$1$.

 We can now prove Theorem~\ref{thm:main_n_dim}. 
%%%%%%%%%%%%%%%%%%%%%%
\begin{pf}
%[Proof of Theorem~\ref{thm:main_n_dim}]
%%%%%%%
Since the system \eqref{eq:nonlin3} is  strongly 2-cooperative,  we have that 
\begin{equation}\label{eq:imilz'}
x(t,a)\in P^2_-\setminus \left\{0 \right\} \text { for some } t\geq 0 \Rightarrow x(\tau,a)\in P^2_+ =\Int(P^2_-) \text{ for all } \tau>t.
\end{equation}
Combining this with the assumed invariance of~$\Omega$ implies that~$\Omega_{\leq 1}$ is also invariant.  Note that $0\in \Omega_{\leq 1}$. To prove the theorem we use the   strong \Poincare-Bendixon property of strongly 2-cooperative systems, the instability of~$J_f(0)$ and the spectral properties of strongly 2-positive systems in Proposition \ref{lem:hurwitz_and_2_coop} to
 show that, for all~$a\in \Omega_{\leq 1}\setminus\left\{0 \right\}$, the solution $x(t,a)$  does not converge to~$0$ and thus it must converge to a periodic solution. The proof includes 
 several steps.

\underline{\emph{Step 1:}} A change of variables.

Let~$A:=J_f(0)$. Since the system \eqref{eq:nonlin3} is  strongly 2-cooperative,  the linear system~$\dot{y}=Ay$ is strongly $2$-positive. Let~$S\in\R^{n\times n}$ be the invertible matrix in  Corollary~\ref{Cor:JordA}.
For $\delta>0$, let
\begin{equation}\label{eq:DiagScal}
S_{\delta} := \left[
\begin{array}{c|c}
\begin{matrix} 1 &0\\0&\delta \end{matrix} & \textbf{0}\\ \hline
\textbf{0} & I_{n-2}
\end{array}
\right]
 S,
\end{equation}
where $\textbf{0}$ denotes an all-zero matrix of the appropriate size.
Then~$S_\delta$ is invertible. Introduce the change of coordinates~$q (t) : = S_{\delta}x(t)$. Then  
\begin{equation}\label{eq:q_sys} 
    \dot{q} =g(q),  \text{ with } g(q):= S_{\delta} f(S_{\delta}^{-1}q).
\end{equation}
The state-space of this system is   the open, bounded and convex set~$\tilde{\Omega}:=S_\delta \Omega$. 
Let~$\tilde{\Omega}_{\leq 1}:=S_{\delta}\Omega_{\leq 1}$ and~$\tilde{\Omega}_{\geq 2}:= S_{\delta}\Omega_{\geq 2}$.
Then~$\tilde{\Omega}= \tilde{\Omega}_{\leq 1} \uplus \tilde{\Omega}_{\geq 2} $, with $\tilde{\Omega}_{\leq 1}$ being invariant under~\eqref{eq:q_sys}. In these new coordinates, we have
\be\label{eq:tilde_space}
\tilde{W}^1:=S_{\delta}W^1=\operatorname{span}\left\{e^1,e^2 \right\}, \quad
\tilde{W}^2:=S_{\delta}W^2=\operatorname{span}\left\{e^3,\dots,e^n \right\},
\ee
where $\left\{e^i \right\}_{i=1}^n$ is the standard basis in $\mathbb{R}^n$.
Also, 
\begin{enumerate}
    \item  
  $\tilde{W}^1 \cap \tilde{W}^2=\{0\}$,\ $\tilde{W}^1\setminus\{0\} \subseteq S_{\delta}P^2_+ $,\  $\tilde{W}^2\cap S_{\delta}P^2_-=\{0\}$;
\item $\dim(\tilde{W}^1)=2$, $\dim(\tilde{W}^2)=n-2$.
\end{enumerate}
  The origin is an equilibrium of the system~\eqref{eq:q_sys},
and its Jacobian    is
\begin{equation*}
 J_{g}(z)= S_\delta J_f(S_\delta ^{-1} z) S_{\delta}^{-1}.   
\end{equation*}
Using~\eqref{eq:ortho_basis},  \eqref{eq:DiagScal}, and the assumption that~$J_f(0)$ has at least two eigenvalues with a positive real part gives 
%%%%%%%
\begin{equation}\label{eq:TildA}
\tilde{A}_\delta :=  J_{g}(0) = S_{\delta}A S_{\delta}^{-1} =\left[
\begin{array}{c|c}
    \tilde \Lambda_\delta
    & 0\\ \hline
    0 & \Psi
\end{array}
\right] , 
\end{equation}
with
\[ 
\tilde \Lambda_\delta:= 
    \operatorname{diag}\left(1,\delta \right)\Lambda \operatorname{diag}\left(1,\delta^{-1} \right) ,  
\]
and~$\tilde \Lambda_\delta$ has one of the following three forms (see  Corollary \ref{Cor:JordA}):
%%%%%%%%
\begin{equation}\label{eq:CasesAtild}
(i)\ \tilde \Lambda_\delta=\begin{bmatrix}
u_1 & 0\\ 0 & u_2
\end{bmatrix},\ u_1,u_2>0; \quad (ii)\ \tilde \Lambda_\delta=\begin{bmatrix}
u & -{\delta}^{-1} v \\
\delta v & u
\end{bmatrix},\ u>0, v\neq 0; \quad (iii)\ \tilde \Lambda_\delta=\begin{bmatrix}
    u & {\delta}^{-1}\\
    0 & u
\end{bmatrix},\ u>0 . 
\end{equation}
 %%%%
Let~$\tilde P_\delta: = (\tilde \Lambda_\delta+\tilde \Lambda_\delta^\top)/2$. If case~$(i)$ holds then $\tilde P_\delta$
 is in fact independent of~$\delta$ and is positive-definite. If~case~$(ii)$ holds then~$\tilde P_\delta$ is positive-definite for~$\delta=1$. If case~$(iii)$ holds then $\tilde \Lambda_\delta$ is positive-definite for any~$\delta>0$ sufficiently large. Summarizing, we can always choose~$\delta>0$
such that~$\tilde P_\delta$ is positive-definite.

%%%%%%%%%%%%%%%%%%%%%%
\underline{\emph{Step 2:}} Derivation of an appropriate conic neighborhood around $\tilde{W}^2$.

Recall that $\tilde{W}^2\cap S_{\delta}P^2_- = \left\{0 \right\}$. Hence, $\tilde{W}^2\cap \tilde{\Omega}\subseteq \tilde{\Omega}_{\geq 2}$, with $\tilde{\Omega}_{\geq 2}$ being an open set. 
For any~$\xi \in \mathbb{R}^n\setminus \left\{0 \right\}$, define
\begin{equation*}
    {p}(\xi): = \frac{\sqrt{\sum_{i=3}^n\xi_i^2}}{\left|\xi \right|},
\end{equation*}
that is, $p(\xi)$ is the ratio between the norm of the projection of $\xi$ on~$\tilde{W}^2$ and the norm of $\xi$. By definition,~$p$ is a homogeneous function of degree zero,  
that is, 
\be\label{eq:p_is_jomog}
p(\alpha \xi)=p(\xi) \text{ for any } \alpha\in\R\setminus\{0\},
\ee
and also~$p(\xi)\in [0,1]$ for all~$\xi \in \mathbb{R}^n\setminus \left\{0 \right\}$. By~\eqref{eq:tilde_space},  for any~$\xi\in \mathbb{R}^n\setminus \left\{0\right\}$ we have 
\be\label{eq:pisone}
p(\xi)=1 \text{ if and only if } \xi \in \tilde W^2\setminus\{0\}.
\ee
%%%

The following lemma  shows that there is a conic neighborhood around $\tilde{W}^2\setminus\left\{0\right\}$, whose intersection with $\tilde{\Omega}$ lies entirely in $\tilde{\Omega}_{\geq 2}$. 
%%%%%%%%%%%%%%
\begin{Lemma}\label{lemm:AngularSep}
There exists   $ \tilde \varepsilon \in (0,1)$ such that, for any $\xi \in \tilde{\Omega}\setminus \left\{0 \right\}$, we have that
\begin{equation*}
p(\xi) > 1-\tilde \varepsilon \Rightarrow \xi \in \tilde{\Omega}_{\geq 2}.
\end{equation*}
\end{Lemma}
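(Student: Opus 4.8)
The plan is to argue by contradiction, exploiting compactness of the unit sphere together with the fact that $p$ is homogeneous of degree zero and that $\tilde\Omega_{\geq 2}$ is open. First I would suppose, for contradiction, that no such $\tilde\varepsilon$ exists. Then for each $m\in\mathbb N$ (taking $\tilde\varepsilon=1/m$) there is a point $\xi^m\in\tilde\Omega\setminus\{0\}$ with $p(\xi^m)>1-1/m$ but $\xi^m\notin\tilde\Omega_{\geq 2}$, i.e. $\xi^m\in\tilde\Omega_{\leq 1}=\tilde\Omega\cap S_\delta P^2_-$. By homogeneity \eqref{eq:p_is_jomog} and the fact that $S_\delta P^2_-$ is a cone (property \textbf{(P2)}), I may normalize and replace $\xi^m$ by $\xi^m/|\xi^m|$, so that $\xi^m$ lies on the unit sphere, still satisfies $p(\xi^m)>1-1/m$, and still lies in $S_\delta P^2_-$ (note the normalized point need not lie in $\tilde\Omega$, but membership in the closed cone $S_\delta P^2_-$ is preserved, which is all I need).

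Next I would pass to a convergent subsequence $\xi^{m_j}\to\xi^\ast$ with $|\xi^\ast|=1$, using compactness of the unit sphere. Since $p$ is continuous on $\mathbb R^n\setminus\{0\}$ and $p(\xi^{m_j})\to 1$, I get $p(\xi^\ast)=1$, hence by \eqref{eq:pisone} $\xi^\ast\in\tilde W^2\setminus\{0\}$. On the other hand, $S_\delta P^2_-$ is closed (being the image of the closed set $P^2_-$, property \textbf{(P1)}, under the invertible linear map $S_\delta$), so $\xi^\ast\in S_\delta P^2_-$. But $\tilde W^2\cap S_\delta P^2_-=\{0\}$ (established in Step 1), which contradicts $|\xi^\ast|=1$. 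This contradiction yields the existence of $\tilde\varepsilon\in(0,1)$, and since the implication is vacuous when $1-\tilde\varepsilon\geq 1$ we may freely take $\tilde\varepsilon<1$.

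The only delicate point — and the step I expect to require the most care — is the normalization: one must be sure that after rescaling to the unit sphere the relevant properties survive. Homogeneity of $p$ handles the value of $p$, and the cone property of $S_\delta P^2_-$ handles membership in that set; the loss of membership in $\tilde\Omega$ is harmless because the final contradiction only uses $\xi^\ast\in\tilde W^2\setminus\{0\}$ together with $\xi^\ast\in S_\delta P^2_-$. Everything else is a routine compactness-plus-continuity argument. (Alternatively, one could phrase this directly: the sets $\{p\geq 1-\varepsilon\}\cap\{|\xi|=1\}$ form a decreasing family of compact sets whose intersection over $\varepsilon>0$ is $\tilde W^2\cap\{|\xi|=1\}$, which is disjoint from the closed set $S_\delta P^2_-$; hence for $\varepsilon$ small the former is already disjoint from $S_\delta P^2_-$, and homogeneity propagates this to all of $\tilde\Omega\setminus\{0\}$.)
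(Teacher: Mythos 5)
Your proof is correct and follows essentially the same compactness-plus-contradiction argument as the paper: build a sequence violating the claim, rescale using homogeneity of $p$ and the cone property of $S_\delta P^2_-$, pass to a convergent subsequence, and derive a contradiction from $\tilde W^2 \cap S_\delta P^2_- = \{0\}$. The only cosmetic difference is that the paper rescales to $\partial B(0,\mu)$ with $B(0,\mu)\subseteq\tilde\Omega$ so that the scaled points stay in $\tilde\Omega_{\leq 1}$, whereas you rescale to the unit sphere and correctly note that retaining membership in $\tilde\Omega$ is unnecessary since the contradiction uses only membership in the closed cone $S_\delta P^2_-$.
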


\begin{pf}
%%%%%%%%%
We prove the statement by contradiction. Assume that the claim  does not hold. Then    there exist sequences~$\left\{ \xi^\ell  \right\}_{\ell = 1}^{\infty}\subseteq \tilde{\Omega}\setminus \left\{0 \right\} $ 
and~$\left\{\varepsilon_{\ell} \right\}_{\ell=1}^{\infty}\subseteq (0,1)$ such that 
\begin{equation}\label{eq:AssumpContr}
\lim_{\ell \to \infty}\varepsilon_{\ell} = 0,\qquad  p\left(\xi^{\ell} \right)  > 1-\varepsilon_{\ell},\qquad  \xi^{\ell}\in \tilde{\Omega}_{\leq 1},
\end{equation}
where we used the fact 
that~$\tilde{\Omega}= \tilde{\Omega}_{\leq 1} \uplus \tilde{\Omega}_{\geq 2} $. 
% Since~$\tilde \Omega$ contains an open neighborhood of the origin,
  Let $\mu>0$ be such that $B(0,\mu)\subseteq \tilde{\Omega}$. Using the fact that~$p$ is homogeneous of degree zero and~$S_{\delta}P^2_-$ is scaling invariant (see Property \textbf{(P2)} in Section~\ref{sec:p2_homog}), we can scale the vectors~$\left\{ \xi_{\ell}\right\}_{\ell=1}^{\infty}$ to~$\partial B(0,\mu)$, while preserving \eqref{eq:AssumpContr}, to obtain 
%%%%%%%%%%%%%%
\begin{equation}\label{eq:scaledxi}
\lim_{\ell \to \infty}\varepsilon_{\ell} = 0,\qquad  p\left(\mu \frac{\xi^{\ell}}{|\xi^{\ell}|} \right)  > 1-\varepsilon_{\ell},\qquad  \mu \frac{ \xi^{\ell}} {|\xi^{\ell}|} \in \tilde{\Omega}_{\leq 1}.
\end{equation}
Passing to a sub-sequence, if needed, we may assume that~$ \mu \frac{ \xi^{\ell}} {|\xi^{\ell}|}$ converges to a limit~$\xi$, with~$|\xi|=\mu>0$, and~$p(\xi)\geq 1$, so~$p(\xi) =  1$, whence $\xi \in \tilde W^2 \setminus\{0\}$ in view of~\eqref{eq:pisone}. 
Since~$P^2_-$ is closed and~$S_\delta$ is invertible, $S_\delta P^2_- $ is closed, so~\eqref{eq:scaledxi} implies that~$\xi\in S_\delta P^2_- $. 
Using~\eqref{eq:pisone} gives
\[
 \xi \in (S_\delta P^2_-)\cap  (\tilde W^2 \setminus\{0\}).  
\]
  However, the intersection of these two sets is empty. This contradiction completes the proof of Lemma~\ref{lemm:AngularSep}.  
 \end{pf}

%%%%%%%%%
\begin{Corollary}\label{Cor:CoordWeights}
%%%%%%%%%%%%%%%
There exists 
$\tilde \theta>0$ such that 
\begin{equation}\label{eq:tilde_q}
\xi \in \tilde{\Omega}_{\leq 1}\setminus \left\{0 \right\}   \implies
\left|\xi \right|^2 \leq   \left(\xi_1^2+\xi_2^2 \right) \tilde \theta.
\end{equation}
%%%%%%%%%%%%%%
\end{Corollary}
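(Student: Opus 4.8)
The plan is to deduce this directly from Lemma~\ref{lemm:AngularSep} by rewriting the angular-separation statement as a quantitative coordinate inequality. First I would record the elementary identity relating $p$ to the splitting of $\xi$ along $\tilde{W}^1$ and $\tilde{W}^2$: by~\eqref{eq:tilde_space}, for every $\xi\in\R^n\setminus\{0\}$ one has $|\xi|^2=(\xi_1^2+\xi_2^2)+\sum_{i=3}^n\xi_i^2$, hence
\begin{equation*}
p(\xi)^2=\frac{\sum_{i=3}^n\xi_i^2}{|\xi|^2}=1-\frac{\xi_1^2+\xi_2^2}{|\xi|^2},
\qquad\text{equivalently}\qquad \xi_1^2+\xi_2^2=\big(1-p(\xi)^2\big)|\xi|^2 .
\end{equation*}

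Next I would invoke the contrapositive of Lemma~\ref{lemm:AngularSep}. Since $\tilde{\Omega}=\tilde{\Omega}_{\leq 1}\uplus\tilde{\Omega}_{\geq 2}$ is a disjoint union, any $\xi\in\tilde{\Omega}_{\leq 1}\setminus\{0\}$ lies in $\tilde{\Omega}\setminus\{0\}$ but not in $\tilde{\Omega}_{\geq 2}$, so Lemma~\ref{lemm:AngularSep} forces $p(\xi)\le 1-\tilde\varepsilon$, where $\tilde\varepsilon\in(0,1)$ is the constant produced by that lemma. Because $0\le p(\xi)\le 1$, this yields $p(\xi)^2\le(1-\tilde\varepsilon)^2$, and therefore $1-p(\xi)^2\ge 1-(1-\tilde\varepsilon)^2>0$, the strict positivity being exactly what $\tilde\varepsilon\in(0,1)$ guarantees. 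Combining with the identity above, for every $\xi\in\tilde{\Omega}_{\leq 1}\setminus\{0\}$,
\begin{equation*}
\xi_1^2+\xi_2^2=\big(1-p(\xi)^2\big)|\xi|^2\ \ge\ \big(1-(1-\tilde\varepsilon)^2\big)\,|\xi|^2 ,
\end{equation*}
so the assertion~\eqref{eq:tilde_q} holds with $\tilde\theta:=\big(1-(1-\tilde\varepsilon)^2\big)^{-1}>0$.

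I do not expect a genuine obstacle here: all the substance is already contained in Lemma~\ref{lemm:AngularSep} (which in turn rests on the closedness of $S_\delta P^2_-$, the characterization~\eqref{eq:pisone} of the level set $\{p=1\}$, and the disjointness $(S_\delta P^2_-)\cap(\tilde{W}^2\setminus\{0\})=\emptyset$); the corollary is merely the reformulation of ``$p$ is bounded away from $1$ on $\tilde{\Omega}_{\leq 1}\setminus\{0\}$'' as a uniform lower bound on the $\tilde{W}^1$-component of $\xi$. The only point deserving a word of care is that $\tilde\theta$ is finite precisely because $\tilde\varepsilon<1$, and this is inherited from the conclusion $\tilde\varepsilon\in(0,1)$ of Lemma~\ref{lemm:AngularSep}.
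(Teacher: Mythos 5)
Your proof is correct and follows essentially the same route as the paper's: both apply Lemma~\ref{lemm:AngularSep} (in contrapositive form) to obtain $p(\xi)\le 1-\tilde\varepsilon$ on $\tilde\Omega_{\leq 1}\setminus\{0\}$ and then rearrange to bound $|\xi|^2$ by a multiple of $\xi_1^2+\xi_2^2$, arriving at the identical constant $\tilde\theta=\bigl(1-(1-\tilde\varepsilon)^2\bigr)^{-1}$. Your use of the identity $\xi_1^2+\xi_2^2=(1-p(\xi)^2)|\xi|^2$ is a marginally more direct algebraic path than the paper's intermediate step, but the argument is the same.
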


\begin{pf}
%%%%%%%%%%%%%%
Let~$\tilde\epsilon
\in(0,1)$ be as in Lemma~\ref{lemm:AngularSep},
and define~$\tilde q:= (1-\tilde \epsilon)^2$, so~$\tilde q\in(0,1)$.
Fix~$\xi \in \tilde{\Omega}_{\leq 1}\setminus \left\{0 \right\}$. Then
Lemma~\ref{lemm:AngularSep} gives
$
(p(\xi))^2 \leq (1-\tilde \varepsilon)^2=\tilde q,
$
that is, 
$\sum_{i=3}^n \xi_i^2 \leq \tilde q \sum_{i=1}^n\xi_i^2  $,
so
 $(1-\tilde q)\sum_{i=3}^n\xi_i^2 \leq \tilde q\left(\xi_1^2+\xi_2^2 \right)$. Thus,
 \begin{equation*}
 %%%%%%%
     |\xi|^2= \xi_1^2+\xi_2^2+\sum_{i=3}^n \xi_i^2
     \leq \left(1+\frac{\tilde q}{1-\tilde q} \right)
     \left(\xi_1^2+\xi_2^2 \right)
     =\frac1{1-\tilde q} \left(\xi_1^2+\xi_2^2 \right),
%%%%%
 \end{equation*}
 and~\eqref{eq:tilde_q} holds for~$\tilde \theta:=\frac1{1-\tilde q}>0$.
%%%%%%%
 \end{pf}

\underline{\emph{Step 3:}} Local  Lyapunov analysis.\\
%%%%%%%%%%%%%%%%%%
We now show that for any $a\in \tilde{\Omega}_{\leq 1}\setminus \left\{0 \right\}$  the solution $q(t,a)$ of \eqref{eq:q_sys}  remains bounded away from the unique equilibrium $0\in \tilde{\Omega}$ of \eqref{eq:q_sys}. Applying a Taylor expansion to the right-hand side of \eqref{eq:q_sys} gives 
%%%%%%%%
\begin{equation}\label{eq:qsystem}
\dot{q} =  \tilde{A} q + h(q).
\end{equation}
Since we assume that~$f$ is~$C^2$ on a convex neighborhood that includes the closure of~$\Omega$ (and this carries over to~$g$ and~$\tilde \Omega$), there exists~$M>0$ such that the nonlinear terms in~\eqref{eq:qsystem} satisfy 
\be\label{eq:tayolr_bound}
|h_i(q)|\leq M|q|^2 \text{ for all } q\in\tilde\Omega, \ i\in[n].
\ee
%%% 

Let
\begin{equation}
    V(q): = \frac{1}{2}\left(q_1^2+q_2^2\right).
\end{equation}
  Intuitively, the value $ (2V(q))^{1/2}=(q_1^2+q_2^2)^{1/2}$ is the norm of the projection of $q$ on the dominant unstable directions (eigenvectors) of $\tilde{A}_{\delta}$ in \eqref{eq:TildA}. 

For any~$\eta>0$,  
let
\be\label{eq:def_H}
H_{\eta}: = \tilde{\Omega}_{\leq 1}\cap  \left\{q\in \mathbb{R}^n \suchthat  V(q)\geq \eta \right\}.
\ee
  Note further that~$0\not\in H_\eta$. 
Recalling that $\tilde{\Omega}_{\leq 1}$ is invariant for~\eqref{eq:qsystem},
our next goal is to show that there exists a value~$\eta_0>0$
such that~$H_\eta $ is an invariant set of~\eqref{eq:qsystem}  for any $0<\eta<\eta_0$.  For this purpose, we evaluate the Lyapunov derivative~$\dot V(q)$
%\begin{equation*}
%\dot{V}(q):= \nabla V(q)\dot{q} = \nabla %V(q) \left(\tilde{A}q+h(q) \right)
%\end{equation*}
for $q\in \mathcal{H}_{\eta}$ and show that there exists $\eta_0>0$ such that, if $0<\eta<\eta_0$, then 
\begin{equation*}
V(q) = \eta \Longrightarrow \dot{V}(q)>0.
\end{equation*}
The latter implies, in particular, that 
if~$q(t,a)$ is a solution of~\eqref{eq:qsystem} with~$V(a)\geq \eta$, where~$0<\eta<\eta_0$, then~$\inf_{t\geq 0}V(q(t,a))\geq \eta$.

The Lyapunov derivative is
\begin{equation*}
    \dot V(q)= q _1\dot q_1+q_2\dot q_2= \begin{bmatrix}
        q_1&q_2
    \end{bmatrix}\tilde \Lambda_\delta\begin{bmatrix}
        q_1\\q_2
    \end{bmatrix}
    +\begin{bmatrix}
        q_1&q_2
    \end{bmatrix} \begin{bmatrix}
        h_1(q)\\h_2(q)
    \end{bmatrix}.
%%%%%%%%%
\end{equation*}
%%%%%%%%%%
Recall that we can always choose~$\delta>0$ such that~$(\tilde \Lambda_\delta+\tilde \Lambda_\delta^\top)/2$ is positive-definite, so there exists~$\alpha>0$ such that
\begin{equation}\label{eq:sec_term}
    \dot V(q)
    \geq  \alpha \left|\begin{bmatrix}
        q_1&q_2
    \end{bmatrix}^{\top} \right|^2 +\begin{bmatrix}
        q_1&q_2
    \end{bmatrix} \begin{bmatrix}
        h_1(q)\\h_2(q)
    \end{bmatrix}.
\end{equation}
To bound the second term in the sum, note that 
\begin{equation*}
\left|
\begin{bmatrix}
        q_1&q_2
    \end{bmatrix} \begin{bmatrix}
        h_1(q)\\h_2(q)
    \end{bmatrix}\right| \leq
  \left|
\begin{bmatrix}
        q_1&q_2
    \end{bmatrix}^{\top} \right| \ 
    \left|\begin{bmatrix}
        h_1(q)\\h_2(q)
    \end{bmatrix}\right| 
    \leq  \left|
\begin{bmatrix}
        q_1&q_2
    \end{bmatrix}^{\top} \right| \ 2M |q|^2
    \leq 
    2M \tilde\theta \  \left|
\begin{bmatrix}
        q_1&q_2
    \end{bmatrix}^{\top} \right|^3,
%%%%%%    
\end{equation*}
where the first inequality follows from the Cauchy-Schwarz inequality, the second inequality uses~\eqref{eq:tayolr_bound}, 
and the third inequality follows from Corollary~\ref{Cor:CoordWeights}. Thus,
\[
\dot V(q)\geq
2\alpha V(q)-2M\tilde\theta \big (2V(q)\big )^{3/2} = 2V(q) \left( \alpha - M\tilde{\theta}V(q)^{\frac{1}{2}}\right).
\]
This implies that there exists a sufficiently small~$\eta_0>0$ such that 
if~$V(q)=\eta$, where $0<\eta<\eta_0$,
then~$\dot V(q)>0$,
so~$H_{\eta}$ is an invariant set of~\eqref{eq:qsystem}. 

%%%%%%%%%%%%%%%%
\underline{\emph{Step 4:}} Completing  the proof of Theorem \ref{thm:main_n_dim}.

Consider the nonlinear system~\eqref{eq:OmegaDecomp}. Fix an initial condition~$a\in \Omega_{\leq 1}\setminus \left\{0 \right\}$. Let~$\tilde{a}:=S_{\delta}a\neq 0$ (where $\delta>0$ is chosen so that~$\tilde P_\delta$ is positive-definite).   Then~$\tilde a = (\tilde{a}_1,\dots,\tilde{a}_n)^{\top}\in\tilde \Omega_{\leq 1}\setminus\{0\}$.
If~$\tilde{a}_1=
\tilde{a}_2=0$  then
\[
0\neq \tilde a\in \tilde W^2\cap \tilde \Omega_{\leq 1}\subseteq \tilde W^2 \cap S_\delta  P^2_-,
\]
which is a contradiction. 
We conclude 
  that~$\tilde{a}_1^2+\tilde{a}_2^2\neq 0$, so
 there is some $\eta\in(0,\eta_0)$ such that $\tilde{a}\in H_{\eta}$. By
 Step~3, the solution $q(t,a)$ of \eqref{eq:q_sys} remains in~$H_\eta$ and is thus  
 bounded away from~$0\in \tilde{\Omega}$. This implies that~$x(t,a)$ is  bounded away from~$0\in \Omega$, which is the only equilibrium of \eqref{thm:main_n_dim} in~$\Omega$. Since the system is strongly $2$-cooperative, it satisfies the strong \Poincare-Bendixson property in Definition~\ref{Def:PBProp}.   As $0\notin \omega(a)$,  we conclude that $\omega(a)$ is    a periodic orbit and~$x(t,a)$ converges to~$\omega(a)$ as~$t\to \infty$.
This completes the proof of Theorem~\ref{thm:main_n_dim}.
%%%%%%%%%%%
\end{pf}

   \section{Applications}\label{sec:applic}
 %%%%%%%%%%%%%%%%%%%%%%%
 We describe two applications of Theorem~\ref{thm:main_n_dim}  to models from systems biology.
 
 \subsection{Convergence to 
 periodic orbits in the $n$-dimensional Goodwin model } \label{subsec:applic}
 
The Goodwin model captures a classical   biochemical circuit design where enzyme or
protein synthesis are regulated by incorporating a negative-feedback of the end product~\citep{GOODWIN1965425}.
This has become a touchstone circuit in systems biology, see the recent survey paper by~\cite{gonze2021} and the many references therein. The model includes~$n$ first-order differential equations: 
 %%%%%%%%%%%%%%%%
\begin{equation}\label{eq:3dgood}
\begin{cases}
\dot x_1& =-\alpha_1 x_1 + \frac{1}{1+x_n^m},\\
\dot x_2& =-\alpha_2 x_2 +x_1,\\
\dot x_3& =-\alpha_3 x_3 + x_2,\\
&\vdots\\
\dot x_n& =-\alpha_ n x_n + x_{n-1}, 
\end{cases}
\end{equation}
where~$\alpha_i> 0$ for all~$i=1,\dots,n$,  
and~$m\in \mathbb{N}$. 
Denote by
$\alpha:=\alpha_1\alpha_2\alpha_3\cdots\alpha_n$
the product of all the dissipation gains 
in the system.

The state space of~\eqref{eq:3dgood} is~$\Omega:=\R^n_{\geq 0}$. Furthermore, all trajectories are bounded. In fact, any trajectory emanating from~$\Omega$ eventually enters into the closed box
  $\B_G := \{x\in\R^n_{\geq 0} 
  \suchthat x_1 \leq  \alpha_1^{-1},\; x_2\leq (\alpha_1\alpha_2)^{-1}, \dots,x_n\leq (\alpha_1\dots\alpha_n)^{-1}\}$.
Moreover, since on $\partial \B_G$ the vector field of~\eqref{eq:3dgood} points into $\operatorname{int}(\B_G)$, the solutions eventually enter $\operatorname{int}(\B_G)$.

Since~$ {\B_G} $ is a compact, convex,  and invariant set,     $\operatorname{int}(\B_G)$ is also an invariant set (see \cite{mcs_angeli_2003}), and there exists at least one equilibrium point~$e\in \operatorname{int}\left(\B_G\right)$. By computing the equilibria of system~\eqref{eq:3dgood}
we can see that~$e_n$ is a real and positive root of the polynomial 
\be\label{eq:ps_e3}
Q(s): = \alpha  s^{m+1} +\alpha  s-1,
\ee
and, in view of Descartes' rule of signs,
there is a unique such~$e_n$. Then
\be\label{eq:eq+good}
e_j = \left(\prod_{k\geq j+1}\alpha_k \right)e_n,\text{ for all } j\in [n-1],
\ee
and hence~$e\in \operatorname{int}\left(\mathcal{B}_{G} \right
)$ is unique.

Several studies (see e.g.~\cite{sanchez_goodein_stab} and the references therein) 
derived conditions guaranteeing that the equilibrium~$e\in\operatorname{int}\left(\mathcal{B}_G\right)$ is globally asymptotically stable.  
\cite{Tyson_3D} analyzed the special case of~\eqref{eq:3dgood} with~$n=3$. He 
noted that if~$e$ is locally  asymptotically stable, then one may expect that all solutions converge to~$e$, and   proved 
 that system~\eqref{eq:3dgood}
admits a periodic solution whenever~$e$ is unstable.  For~$n=3$, the model can also be studied using the theory of competitive dynamical systems~\citep{hlsmith}. The case~$n=3$ has also been analyzed using the theory of Hopf bifurcations~\citep{Woller_2014_3D_Goodwin}. For a general~$n$, the analysis using Hopf bifurcations becomes highly
non-trivial and results exist only for special cases, e. g.  
under the additional assumption that all the~$\alpha_i$'s are equal, see~\cite{hopf_Goodwin_n_dim}.
\cite{HASTINGS1977}   studied the general $n$-dimensional case and proved that, if the Jacobian of the vector field  at the equilibrium has no repeated eigenvalues and at least one eigenvalue with a positive real part, then the system admits a non-trivial periodic orbit; the proof relies on the Brouwer fixed point theorem.

Our Theorem~\ref{thm:main_n_dim} allows us to prove the following result. 
\begin{Corollary}
    \label{coro:good}
%%%%%%%%%%%%%%%
    Consider the $n$-dimensional  Goodwin model~\eqref{eq:3dgood} with~$n\geq 3$,  and let~$e$ denote the unique 
    equilibrium
    in~$\operatorname{int}\left(\B_G\right)$. Let~$J:\R^n_{\geq 0}\to\R^{n\times n}$ denote the Jacobian of the vector field of the Goodwin model. Suppose that~$J(e)$
   has at least one eigenvalue with a positive real part.
   Then, for any initial condition~$a\in\R^n_{\geq 0}\setminus\{e\}$
   such that~$s^-(a-e)\leq 1$,
   the solution~$x(t,a)$ of~\eqref{eq:3dgood} converges to a (non-trivial) periodic orbit as $t\to \infty$. 
    %%% 
\end{Corollary}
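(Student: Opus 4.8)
The plan is to derive Corollary~\ref{coro:good} from Theorem~\ref{thm:main_n_dim}, the only non-routine point being that the corollary assumes a \emph{single} unstable eigenvalue whereas Theorem~\ref{thm:main_n_dim} requires \emph{two}. First I would verify the standing hypotheses of Theorem~\ref{thm:main_n_dim} on the bounded invariant set $\Omega:=\operatorname{int}(\mathcal{B}_G)$. The Jacobian $J(x)$ of the vector field of~\eqref{eq:3dgood} has diagonal entries $-\alpha_i<0$, sub-diagonal entries all equal to $1$, $(1,n)$ entry equal to $g'(x_n)=-m x_n^{m-1}(1+x_n^m)^{-2}\le 0$ (writing $g(y):=(1+y^m)^{-1}$), and all remaining entries zero; hence $J(x)$ has the sign pattern $\bar A_2$ of~\eqref{eq:sign_2_posi} for every $x\in\R^n_{\geq 0}$, and on $\Omega$ (where $x_n>0$) its $(1,n)$ entry is strictly negative, making $J(x)$ irreducible there. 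Since sign patterns and irreducibility are inherited by the averaging integral defining $M_{a,b}(t)$ and the segment joining two points of the convex set $\Omega$ stays in $\Omega$, the associated variational equation is strongly $2$-positive, i.e.\ the Goodwin model is strongly $2$-cooperative on $\Omega$; moreover $\Omega$ is open, bounded, convex and invariant, $e$ is its unique equilibrium, and $f$ is $C^\infty$ on a neighbourhood of $\overline\Omega\subset\R^n_{>0}$.

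The crux is upgrading the eigenvalue hypothesis. Expanding $\det(sI-J(e))$ along its last column gives the characteristic polynomial $p(s)=\prod_{i=1}^n(s+\alpha_i)+c$, where $c:=-g'(e_n)>0$ because $e_n>0$. Hence $J(e)$ has \emph{no} real eigenvalue with nonnegative real part: for real $s\ge 0$ every factor $s+\alpha_i$ is positive, so $\prod_i(s+\alpha_i)>0\ne -c$. Since the spectrum of a real matrix is symmetric under complex conjugation, the eigenvalues with positive real part therefore occur in genuinely complex conjugate pairs, so the presence of one forces the presence of two. Consequently $J(e)$ satisfies the eigenvalue assumption of Theorem~\ref{thm:main_n_dim}, and its shifted form (with $e$ in place of the origin) yields: for every $b\in\Omega\setminus\{e\}$ with $s^-(b-e)\le 1$, the solution $x(t,b)$ converges to a non-trivial periodic orbit.

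It remains to pass from $\Omega$ to all of $\R^n_{\geq 0}$. Fix $a\in\R^n_{\geq 0}\setminus\{e\}$ with $s^-(a-e)\le 1$. Because $J(\cdot)$ has sign pattern $\bar A_2$ on the whole convex set $\R^n_{\geq 0}$, the variational equation governing $z(t):=x(t,a)-e$ is $2$-positive, so $P^2_-$ is forward invariant and $s^-(x(t,a)-e)\le 1$ for all $t\ge 0$. The trajectory enters $\Omega=\operatorname{int}(\mathcal{B}_G)$ at some finite time $t_0\ge 0$, and $x(t,a)\ne e$ for all $t$ (otherwise uniqueness of solutions would force $a=e$). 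Applying the previous paragraph with the initial condition $x(t_0,a)\in\Omega\setminus\{e\}$, which still satisfies $s^-(x(t_0,a)-e)\le 1$, and invoking time-invariance, $x(t,a)$ converges to a non-trivial periodic orbit as $t\to\infty$, which is the assertion of Corollary~\ref{coro:good}.

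The one genuinely delicate step is the eigenvalue upgrade: Theorem~\ref{thm:main_n_dim} needs two unstable eigenvalues while the corollary only grants one, and the gap is closed by the cyclic structure of the Goodwin Jacobian, whose characteristic polynomial $\prod_i(s+\alpha_i)+c$ (with $c>0$) has no real root in the closed right half-plane, forcing unstable eigenvalues to appear in conjugate pairs. Everything else is bookkeeping: a sign-pattern and irreducibility check for strong $2$-cooperativity on $\operatorname{int}(\mathcal{B}_G)$, and a standard ``enter the absorbing box'' argument, whose only subtlety is carrying the constraint $s^-(x(t,a)-e)\le 1$ through the transient --- which works because $2$-cooperativity (as opposed to \emph{strong} $2$-cooperativity) is available on the boundary of $\R^n_{\geq 0}$ as well.
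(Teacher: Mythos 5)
Your proposal is correct and follows essentially the same route as the paper: verify the sign pattern and irreducibility to get strong $2$-cooperativity, use the positivity of all coefficients of $\det(sI-J(e))=\prod_i(s+\alpha_i)+c$ (with $c>0$) to rule out real unstable eigenvalues and so upgrade one unstable eigenvalue to a complex-conjugate pair, and then combine $2$-cooperativity on $\R^n_{\geq 0}$ (to propagate $s^-(x(t,a)-e)\le 1$) with the absorbing-box argument before invoking Theorem~\ref{thm:main_n_dim} on $\operatorname{int}(\mathcal{B}_G)$. The only cosmetic difference is that you obtain irreducibility pointwise on $\operatorname{int}(\mathcal{B}_G)$ (where $x_n>0$ keeps the $(1,n)$ entry nonzero), whereas the paper argues irreducibility of the averaged variational matrix for almost every $t$ on all of $\R^n_{\geq 0}$; both are adequate for the claim.
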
   

\begin{pf}

The Jacobian of~\eqref{eq:3dgood}
\[
J(x)=\begin{bmatrix}
    -\alpha_1 & 0 &0&\dots& 0& -\frac{m x_n^{m-1}}{(1+x_n^m)^2}\\
    1&-\alpha_2&0&\dots&0&0\\
    0 & 1&-\alpha_3&\dots&0&0\\
    \vdots&\vdots&\vdots&\ddots&\vdots&\vdots\\
  0&0&0&  \dots&-\alpha_{n-1}&0\\
  0&0&0&  \dots&1&-\alpha_n
\end{bmatrix}
\]
has the sign pattern $\bar A_2$ in \eqref{eq:sign_2_posi} for all~$x\in\mathbb{R}^n_{\geq 0}$, hence the system is   2-cooperative on~$\mathbb{R}^n_{\geq 0}$. 
We now show that the system is  strongly $2$-cooperative on~$\mathbb{R}^n_{\geq 0}$. 
 If $x(t,a)$ is a solution of \eqref{eq:3dgood} with initial condition~$a \in \mathbb{R}^n_{\geq 0}$   and $x_n(t_0,a )=0$ at some time~$t_0\geq0$, then there exists some $\delta>0$ such that $t\in (t_0,t_0+\delta) \implies x_n(t,a)>0$. In particular, the set $\left\{t\geq 0 \ | \ x_n(t,a)=0 \right\}$ is at most countable, and this implies that the time-varying matrix $M(t)$ in the variational equation \eqref{eq:variational}, which is obtained from integrating~$J(x(t))$, is irreducible for almost all~$t$. As a result, 
 the system is strongly  2-cooperative on~$\mathbb{R}^n_{\geq 0}$ and the set $\left\{x\in \mathbb{R}^n_{\geq 0}\ \suchthat \ s^-(x-e)\leq 1 \right\}$ is forward invariant  (see Remark~\ref{rem:with_zero1}).
%%%%%

To analyze the eigenvalues of~$J(e)$,  
consider the   matrix 
\begin{equation}\label{eq:GoodwinMatrix}
A := \begin{bmatrix}
    -\alpha_1 & 0 &0&\dots& 0& -\beta_n\\
    \beta_1&-\alpha_2&0&\dots&0&0\\
    0 & \beta_2&-\alpha_3&\dots&0&0\\
    \vdots&\vdots&\vdots&\ddots&\vdots&\vdots\\
  0&0&0&  \dots&-\alpha_{n-1}&0\\
  0&0&0&  \dots&\beta_{n-1}&-\alpha_n
\end{bmatrix},
\end{equation}
with $n\geq 3$, and
$
\alpha_i,\beta_j>0 \text{ for all } i,j\in [n].
$
Let $\beta:=\prod_{j=1}^n \beta_j$.
The characteristic polynomial of~$A$ is
\begin{equation}\label{eq:poly_A}
%%%%%%%%%%
p_A(s):= \beta+\prod_{j=1}^n
(s+\alpha_j)=s^n+a_{n-1}
s^{n-1}+\dots+ a_1 s+a_0,
\end{equation}
and since the~$\alpha_i$'s and~$\beta_j$'s are all positive, the coefficients of $p_A(s)$ satisfy $a_i>0$ for all~$i$.
This implies that~$p_A(s)$ cannot have a real positive zero. Since we assume that~$J(e)$ has an unstable eigenvalue~$\lambda_1$, we conclude that~$\lambda_1$ is not real, and therefore its complex conjugate~$\bar \lambda_1$ is another eigenvalue of~$J(e)$ with a positive real part, so~$J(e)$ has at least two unstable eigenvalues. 

Take now $a\in \mathbb{R}^n_{\geq 0}$ such that $s^{-}(a-e)\leq 1$ and consider $x(t,a)$. There exists a time~$T(a)>0$ such that $x\left(T(a),a \right)\in \operatorname{int}\left(\B_G \right)$. Furthermore, as the set $\left\{x\in \mathbb{R}^n_{\geq 0}\ \suchthat \ s^-(x-e)\leq 1 \right\}$ is forward invariant, we also have $s^-\left(x\left( T(a),a\right)-e \right)\leq 1$. Since $\omega(a) = \omega\left(x\left(T(a),a \right) \right)$,  applying Theorem~\ref{thm:main_n_dim} in $\operatorname{int}\left(\mathcal{B}_G \right)$ completes the proof of Corollary~\ref{coro:good}.
\end{pf}

The next  numerical example demonstrates 
Corollary~\ref{coro:good}.
%%%%%%%%%%%%%%%%%%%%
\begin{Example}\label{exa:3DGOOD_num}
Consider     system~\eqref{eq:3dgood} with~$n=4$, 
 $\alpha_1=\alpha_2=\alpha_3=\alpha
_4=1/2$, and~$m=10$. Then  
\[
\B_G=\{x\in\R^4_{\geq0 } \suchthat x_1\leq  2,\;
x_2\leq  4,\; 
x_3\leq  8,\;x_4\leq  16
\},
\]
and
the polynomial in~\eqref{eq:ps_e3}
becomes 
\[
Q(s)=\frac{1}{16}s^{11}+\frac{1}{16} s-1. 
\]
The unique real and positive root of this polynomial is~$e_4\approx 1.2770$,
and
\[
e=\begin{bmatrix}
   e_4/8&    e_4/4&   e_4/2 & e_4
\end{bmatrix}^\top\approx\begin{bmatrix}
     0.1596&    0.3192 &   0.6385&    1.2770
\end{bmatrix}^\top.
\]
The characteristic polynomial of~$J(e)$ is 
\[
\det(sI_4-J(e)) = s^4+2 s^3 
+1.5 s^2+0.5 s+0.6376  . 
\]
As expected, all the coefficients in this polynomial are positive. Applying   the Routh stability criterion reveals that~$e$ is unstable. Indeed, the eigenvalues of~$J(e)$ are   
\[
  0.1158 + 0.6158j,\
   0.1158 - 0.6158j,\ 
   -1.1158 + 0.6158j,\
  -1.1158 - 0.6158j ,
\]
so~$J(e)$ has two unstable eigenvalues.
Fig.~\ref{fig:uns_ex1} depicts the solution~$x(t,a)$ of~\eqref{eq:3dgood} emanating from the initial condition~$a=\begin{bmatrix}
    0.1 &0.1&0.1&0.1
\end{bmatrix}^\top$ as a function of time.
Note that~$s^-(a-e)=0$. As predicted by our result, the trajectory~$x(t,a)$ converges to a periodic orbit. 
%%%%%
%%%%
\begin{figure}[t]
\centering
\includegraphics[scale=0.6]{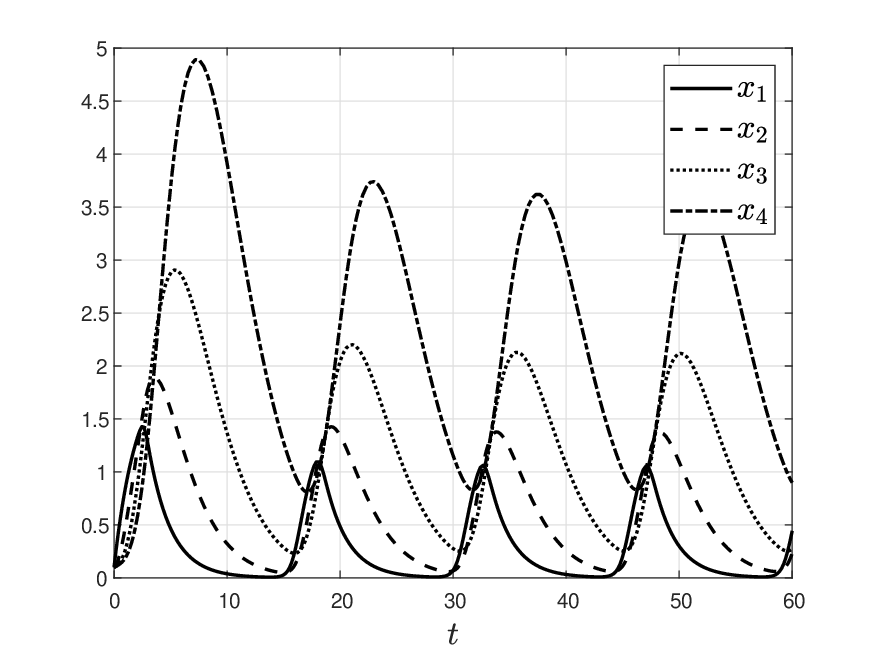}
\caption{Solution to the   Goodwin system in Example~\ref{exa:3DGOOD_num} emanating from~$x(0)=\begin{bmatrix}
    0.1 &0.1&0.1&0.1
\end{bmatrix}^\top$. \label{fig:uns_ex1}}
\end{figure}
\end{Example}

\subsection{Convergence to periodic orbits in a biomolecular oscillator model.} \label{subsec:applic11}
%%%%%%%%%%%%%%%%%%%%
\cite{BCFG2014} proposed the following mathematical model for a biological oscillator based on RNA–mediated regulation: 
%%% 
\begin{equation}\label{eq:FamilyOfSyst1}
\begin{cases}
\dot{x}_1 & = \kappa_1x_2 -\delta_1x_1-\gamma_2x_4x_1,\\
\dot{x}_2&=-\beta_1x_2 +\gamma_1\left(x_2^{tot}-x_2 \right)x_3,\\
\dot{x}_3 &= \kappa_2x_4-\delta_2 x_3-\gamma_1\left(x_2^{tot}-x_2 \right)x_3 ,\\
\dot{x}_4 &= \beta_2\left(x_4^{tot}-x_4 \right)-\gamma_2 x_4x_1 ,
\end{cases}
\end{equation}
where   $\kappa_i$, $\beta_i$, $\delta_i$ and $\gamma_i$, as well as~$x_2^{tot}$ and $x_4^{tot}$, are   positive parameters, and proved that there exist~$x_1^{tot},x_3^{tot}>0$ such that the closed set 
$\mathcal B_O := [0,x_1^{tot}]\times\dots\times [0,x_4^{tot}]$
is an invariant set of the dynamics, and that~$\mathcal B_O$     includes a unique equilibrium~$e\in\Int(\mathcal B_O)$.   
They also showed that, up to a coordinate  transformation, the system~\eqref{eq:FamilyOfSyst1}  is the negative feedback  interconnection of two cooperative systems, and hence it is structurally a strong candidate oscillator according to the classification introduced by \cite{BFG2014structural,BFG2015structuralclass}, namely, local instability can only occur due to a complex pair
of unstable poles crossing the imaginary axis. For such
an interconnection there exists a \emph{sufficient} condition for the global asymptotic stability of~$e$~\citep{mcs_angeli_2003}. \cite{BCFG2014} suggested that, when this condition does not hold, the system may admit a non-trivial periodic solution, and can thus serve as a biological oscillator. This was motivated by the analysis of the linearization of \eqref{eq:FamilyOfSyst1} around the equilibrium $e$, which demonstrated that, for some choices of $\kappa_i$, $\beta_i$, $\delta_i$ and $\gamma_i$, the eigenvalues of $J(e)$ do cross the imaginary axis in the complex plane. The appearance of oscillations was also confirmed  using extensive  simulations. Applying Theorem~\ref{thm:main_n_dim} provides more precise information of a global nature. 

 \begin{Corollary}
%%%%%%%
    Consider the $4$-dimensional  oscillator model~\eqref{eq:FamilyOfSyst1}  and let~$e$ denote the unique 
    equilibrium in~$\operatorname{int}\left(\B_O\right)$. Let~$J:\R^n_{\geq 0}\to\R^{n\times n}$ denote the Jacobian of the vector field of system~\eqref{eq:FamilyOfSyst1}.
    Suppose that~$J(e)$ admits at least two eigenvalues with a positive real part. Then for any initial condition~$x_0\in\Int(\mathcal B_O)$ such that~$s^-(x_0-e)\leq 1$  the solution~$x(t,x_0) $ of~\eqref{eq:FamilyOfSyst1} converges to a (non-trivial) periodic orbit as $t\to \infty$.
%%%%
%%%
\end{Corollary}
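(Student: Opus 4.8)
The plan is to apply Theorem~\ref{thm:main_n_dim}, in the form stated for $e\neq 0$, on the open, bounded and convex set $\Omega:=\Int(\mathcal B_O)$, following the same pattern as the proof of Corollary~\ref{coro:good}. First I would verify the standing hypotheses of the theorem on $\Omega$. The box $\mathcal B_O$ is compact and convex; \cite{BCFG2014} showed that it is invariant and contains the unique equilibrium $e\in\Int(\mathcal B_O)$, so, exactly as in the Goodwin case and via \cite{mcs_angeli_2003}, the open box $\Int(\mathcal B_O)$ is invariant as well. The vector field of~\eqref{eq:FamilyOfSyst1} is polynomial, hence $C^\infty$ on a neighborhood of $\Clos(\Omega)=\mathcal B_O$, and boundedness of $\mathcal B_O$ precludes finite-time escape; thus every $x_0\in\Omega$ yields a unique solution $x(t,x_0)\in\Omega$ for all $t\geq 0$.

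The heart of the argument is to show that~\eqref{eq:FamilyOfSyst1} is strongly $2$-cooperative on $\Omega$. Differentiating the right-hand side of~\eqref{eq:FamilyOfSyst1}, the Jacobian $J(x)$ has off-diagonal entries $J_{12}=\kappa_1$, $J_{14}=-\gamma_2 x_1$, $J_{23}=\gamma_1(x_2^{tot}-x_2)$, $J_{32}=\gamma_1 x_3$, $J_{34}=\kappa_2$, $J_{41}=-\gamma_2 x_4$, while $J_{13}=J_{21}=J_{24}=J_{31}=J_{42}=J_{43}=0$. For $x\in\mathcal B_O$ one has $x_i\geq 0$ and $0\leq x_2\leq x_2^{tot}$, so the six possibly nonzero off-diagonal entries carry the signs $+,-,+,+,+,-$ in positions $(1,2),(1,4),(2,3),(3,2),(3,4),(4,1)$, which together with the vanishing entries matches precisely the sign pattern $\bar A_2$ of~\eqref{eq:sign_2_posi} for $n=4$; hence~\eqref{eq:FamilyOfSyst1} is $2$-cooperative on $\mathcal B_O$. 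On $\Omega=\Int(\mathcal B_O)$, where $x_i>0$ and $x_2<x_2^{tot}$, these six entries are moreover nonzero, and the corresponding interaction digraph, with arcs $2\to 1$, $4\to 1$, $3\to 2$, $2\to 3$, $4\to 3$, $1\to 4$, is strongly connected, so $J(x)$ is irreducible for every $x\in\Omega$. Since $\Omega$ is convex and invariant, for any $a,b\in\Omega$ the averaged Jacobian $M_{a,b}(t)=\int_0^1 J\big(rx(t,a)+(1-r)x(t,b)\big)\,\mathrm{d}r$ in the variational equation~\eqref{eq:variational} integrates $J$ over a segment contained in $\Omega$, hence inherits both the sign pattern $\bar A_2$ and the irreducibility for every $t\geq 0$; therefore~\eqref{eq:variational} is strongly $2$-positive for all $a,b\in\Omega$, i.e.~\eqref{eq:FamilyOfSyst1} is strongly $2$-cooperative on $\Omega$. (This direct check is consistent with the structural observation of \cite{BCFG2014} that~\eqref{eq:FamilyOfSyst1} is, up to a change of coordinates, a negative-feedback interconnection of two cooperative subsystems.)

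It then remains only to use the standing assumption that $J(e)$ has at least two eigenvalues with positive real part, which is the generic instability mechanism for the \emph{strong candidate oscillator} structure identified by \cite{BFG2014structural,BFG2015structuralclass} (instability of $e$ entering through a complex-conjugate pair of poles crossing the imaginary axis). All hypotheses of Theorem~\ref{thm:main_n_dim} are then in force with $\Omega=\Int(\mathcal B_O)$, and the $e\neq 0$ form of the theorem yields exactly the assertion: every $x_0\in\Int(\mathcal B_O)\setminus\{e\}$ with $s^-(x_0-e)\leq 1$ gives a solution converging to a non-trivial periodic orbit as $t\to\infty$. I do not expect a substantive obstacle here; the only points needing care are the irreducibility check, which is a finite verification of strong connectivity of the interaction digraph, and the invariance of $\Int(\mathcal B_O)$, which is inherited from the compact convex invariant box $\mathcal B_O$ exactly as in the proof of Corollary~\ref{coro:good}.
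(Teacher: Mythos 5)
Your proposal is correct and follows essentially the same route as the paper's (much terser) proof: compute the Jacobian, observe it has sign pattern $\bar A_2$, invoke irreducibility on $\Int(\mathcal B_O)$ to get strong $2$-cooperativity, note the interior of the invariant compact convex box is itself invariant (via~\cite{mcs_angeli_2003}), and apply Theorem~\ref{thm:main_n_dim} in its $e\neq 0$ form. The only difference is that you spell out the irreducibility check (strong connectivity of the interaction digraph) and the transfer of the sign pattern to the averaged Jacobian $M_{a,b}(t)$, both of which the paper leaves implicit; these are correct additions and not a deviation in method.
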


\begin{pf}
Since the closed set~$\mathcal B_O$ is an invariant set of the dynamics, so is the open set~$\Int(\mathcal B_O)$ (see, e.g.,~\cite{mcs_angeli_2003}).
The Jacobian of the vector field in~\eqref{eq:FamilyOfSyst1}  is 
\begin{equation}\label{eq:FamilyJacob1}
J(x)=\begin{bmatrix}
    -\delta_1-\gamma_2x_4 & \kappa_1 &0&-\gamma_2 x_1\\
    0& -\beta_1-\gamma_1x_3& \gamma_1\left(x_2^{tot}-x_2 \right)&0\\
    0& \gamma_1 x_3 & -\delta_2-\gamma_1(x_2^{tot}-x_2)& \kappa_2\\
    -\gamma_2x_4 & 0& 0 & -\beta_2-\gamma_2x_1
\end{bmatrix}, 
\end{equation}
and hence the system is strongly 2-cooperative on~$\Int(\mathcal B_0)$. Applying Theorem~\ref{thm:main_n_dim} completes the proof. 
%%%%%%%%%
\end{pf}

 \begin{Example}\label{exa:bio}
     Consider the system~\eqref{eq:FamilyOfSyst1} with the (arbitrarily chosen) parameter 
     values: 
$\beta_1=0.2$,
$\beta_2=0.5$,
$\kappa_1=15$,
$\kappa_2=1$,
$\delta_1=0.01$,
$\delta_2=0.1$,
$\gamma_1=0.1$,
$\gamma_2=20$,
$x_2^{tot}=15$, and~$x_4^{tot}=20$.
     A numerical calculation shows that in this case~$e=\begin{bmatrix}
  3.4932&
   0.6643&
   0.0927&
   0.1421
     \end{bmatrix}^\top$, and~$J(e) $ 
     admits two unstable eigenvalues. 
     Let~$x_0=0$, and note that~$s^-(e-x_0)=0$. Fig.~\ref{fig:candiate}  depicts~$x(t,x_0)$ as a function of time and shows that it
       converges to a periodic orbit.
     %%%%%
 \end{Example}

\begin{figure}[t]
%%%%%%%%%%%%%%%%%%%%%
\begin{subfigure}{0.475\linewidth}
  \includegraphics[width=\linewidth]{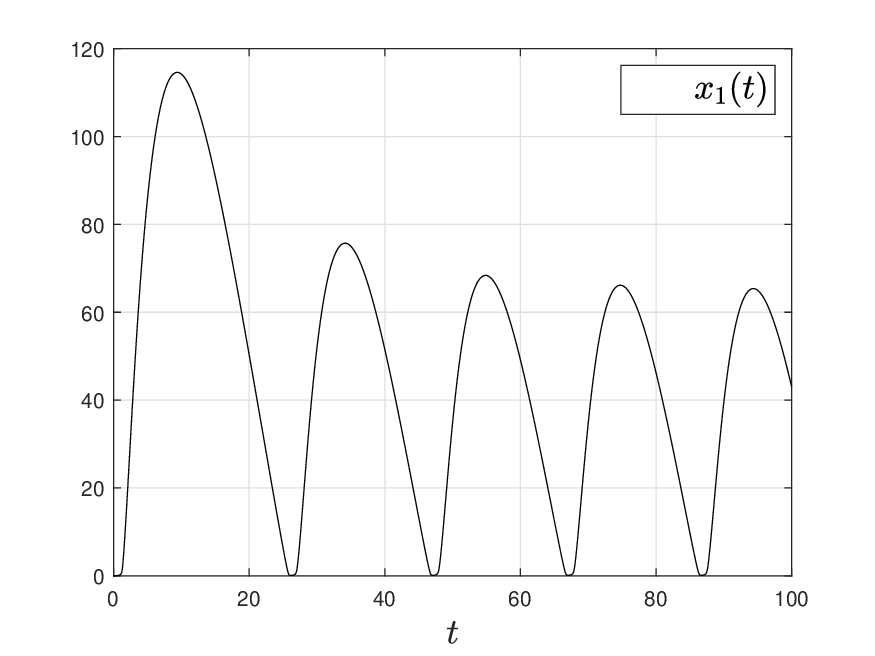}
  %\caption{}
  \label{MLEDdet}
\end{subfigure}\hfill % <-- "\hfill"
\begin{subfigure}{.475\linewidth}
  \includegraphics[width=\linewidth]{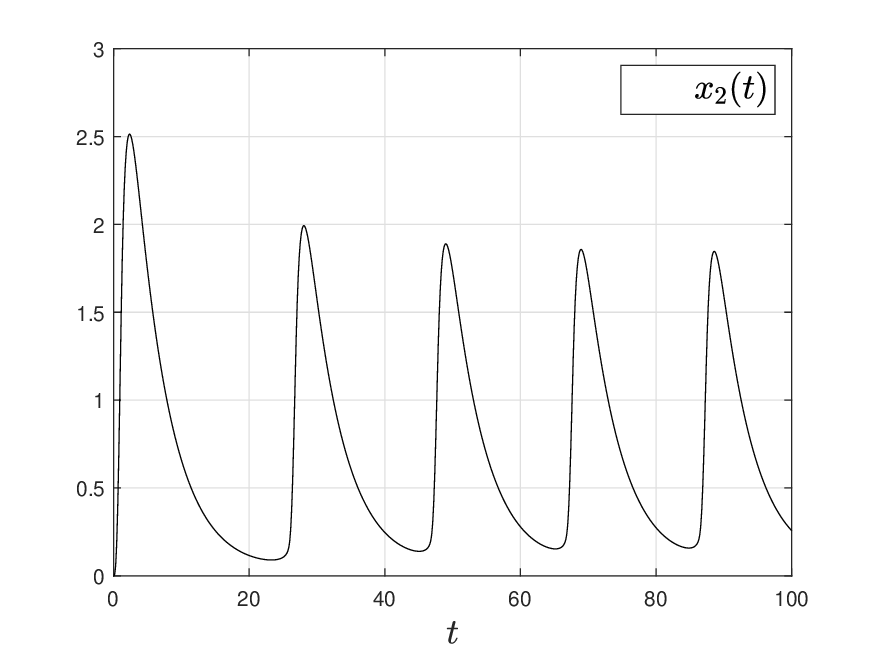}
  %\caption{}
  \label{energydetPSK}
\end{subfigure}

\medskip % create some *vertical* separation between the graphs
\begin{subfigure}{.475\linewidth}
  \includegraphics[width=\linewidth]{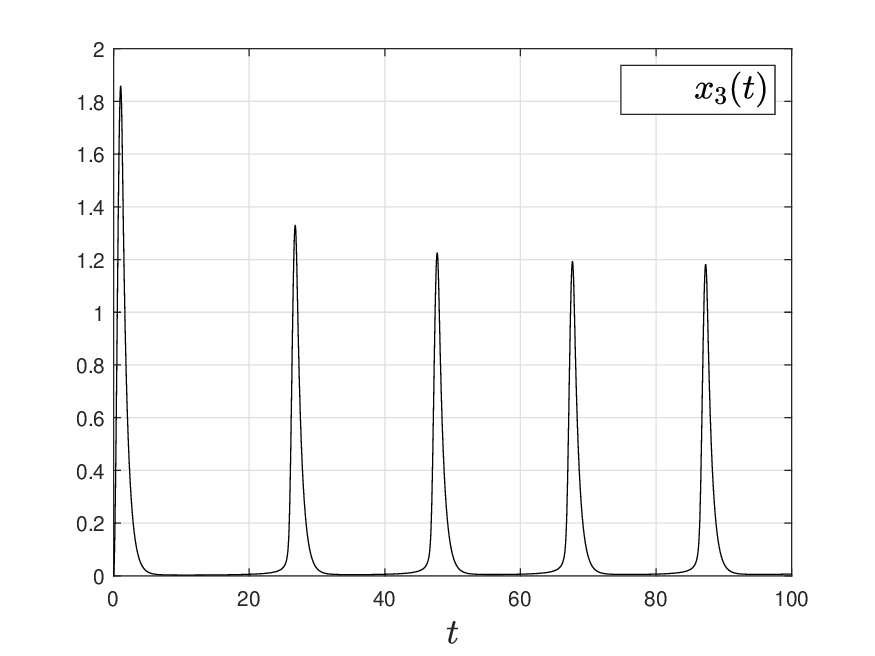}
  %\caption{}
  \label{velcomp}
\end{subfigure}\hfill % <-- "\hfill"
\begin{subfigure}{.475\linewidth}
  \includegraphics[width=\linewidth]{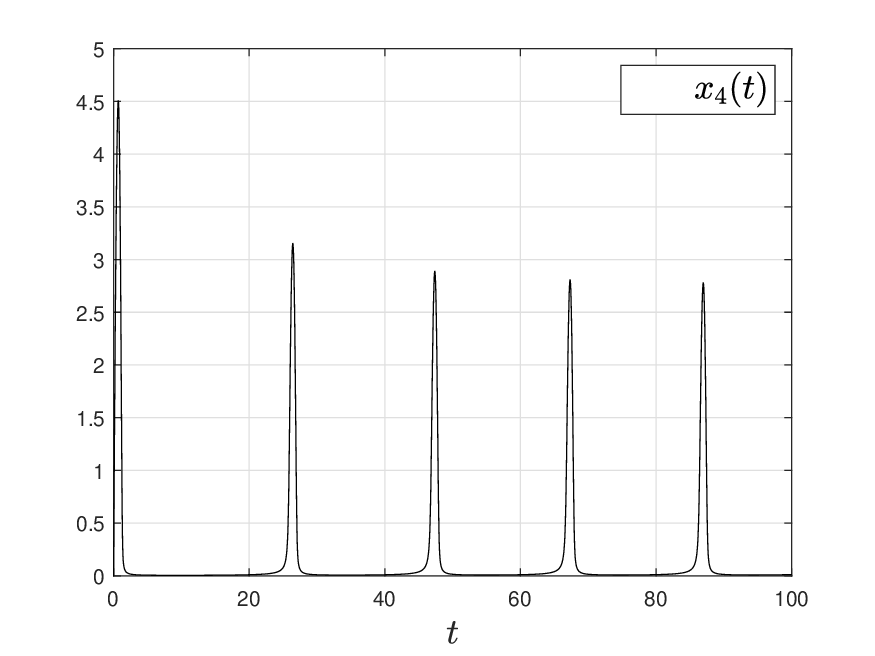}
 % \caption{}
  \label{estcomp}
\end{subfigure}

\caption{Solution~$x_i(t)$, $i=1,\dots,4$, to the oscillator model in Example~\ref{exa:bio} emanating from $x(0)=0$.}
\label{fig:candiate}
\end{figure}

    \section{Discussion}
    %%%%%%
 Strongly 2-cooperative systems enjoy  an important asymptotic property, called the strong \Poincare-Bendixon property: any  bounded solution that keeps a positive distance from the set of equilibria converges to a periodic orbit. For a general system satisfying the strong \Poincare-Bendixon property, it is possible that all bounded solutions converge to an equilibrium, so that the system does not  exhibit oscillatory behaviors. Therefore, it is important to find sufficient conditions that guarantee convergence to a non-trivial
periodic orbit.

For a strongly 2-cooperative system defined on a bounded and convex invariant set with a single equilibrium point, we derived a simple condition that guarantees the existence  of periodic trajectories. Moreover, we explicitly characterized a positive measure set of initial conditions such that any bounded solution emanating from this set must converge to a
periodic orbit. The proof relies on the special asymptotic and spectral properties of strongly 2-cooperative systems.
We demonstrated our theoretical  results using two models from systems biology: the $n$-dimensional Goodwin  model and a 4-dimensional biomolecular oscillator.

An   interesting topic for further research is 
to find conditions that guarantee uniqueness of the periodic orbit (see,~ e.g. the survey paper~\cite{LI1981931}). Then, our results would imply that   the periodic orbit is a global attractor for solutions emanating  from an  invariant set that includes no equilibrium points. 
Another research direction may be     applying the theory to 
  the  systemic design of synthetic biological 
  oscillators (see, e.g.~\citep{BCFG2014,novak2008,syn_osci_2020}). 

%%%%%%%%%%%%%%%%%%%%%%
 \subsection*{Acknowledgments}
 %%%%%%%%%%%%%%%%%%%%%%%%%%%%%%%%%%
 We thank Ron Ofir, Alexander Ovseevich and Florin Avram 
 for helpful comments.

 %%%%%%%%%%   

%%%%%%%
 
%%%%%
%%%%%
 \end{document}